\documentclass{article}
\usepackage{amsfonts}
\usepackage{amsmath}

\newtheorem{theorem}{Theorem}

\newtheorem{corollary}[theorem]{Corollary}

\newtheorem{proposition}[theorem]{Proposition}
\newtheorem{remark}[theorem]{Remark}

\newenvironment{proof}[1][Proof]{\noindent \textbf{#1.} }{\  \rule{0.5em}{0.5em}}
\input{tcilatex}
\begin{document}

\begin{center}
{\LARGE Binomial sums and properties of the Bernoulli transform}

\  \  \ 

\textbf{Laid ELKHIRI}$^{1}$\textbf{, Miloud MIHOUBI}$^{2}$\textbf{\ and
Meriem MOULAY}$^{3}$, \textbf{\medskip }

$^{1}$Tiaret University, Faculty of Material Sciences, RECITS Laboratory,
USTHB, Algiers, Algeria.\\[0pt]
$^{2,3}$USTHB, Faculty of Mathematics, RECITS Laboratory, PO 32, El-Alia,
16111, Algiers, Algeria. \medskip \\[0pt]

$^{1}$\texttt{laid@univ-tiaret.dz, laidmaths82@gmail.com}

$^{2}$\texttt{mmihoubi@usthb.dz, \ miloudmihoubi@gmail.com}

$^{3}$\texttt{m.moulay@usthb.dz, \ meriemro3@gmail.com}
\end{center}

\  \  \  \newline
\textbf{Abstract. }In this paper, we study the binomial sum $S_{n}(q):=%
\overset{n}{\underset{k=0}{\sum }}a_{k}\binom{n}{k}\left( 1-q\right)
^{k}q^{n-k}$ for a given sequence $\left( a_{n}\right) $ of real or complex
numbers. We express $S_{n}(q)$ in function of the powers of $q,$ and, we
explicit it when the sequence $\left( a_{n}\right) $ is the sequence of
Fibonacci numbers, Laguerre polynomials, Meixner polynomials, binomial
coefficients and the sequence $\left[ n\right] _{p}.$ We establish later
some properties, relations, probabilistic interpretations and generating
functions between $S_{n}(q)$ and $S_{n}(x+q-xq).$ Further identities related
to Appell polynomials are also given in the last of the paper.

\noindent \textbf{Keywords: }Bernoulli transform, combinatorial identities,
relations and probabilistic interpretations, Appell polynomials.

\noindent \textbf{Math. Subj. Class.: }11B50; 11C08; 11B39; 11B83.

\section{Introduction}

The Bernoulli transform of a sequence of real numbers $a=\left( a_{n}\right) 
$ is the sequence $\left( S_{n}(q)\right) $ defined by%
\begin{equation*}
S_{n}(q):=S_{n}^{\left( a\right) }(q)=\overset{n}{\underset{k=0}{\sum }}a_{k}%
\binom{n}{k}\left( 1-q\right) ^{k}q^{n-k},
\end{equation*}%
and it used by Flajolet \cite{Fla} in 1999 in aim to estimate asymptotically 
$S_{n}^{\left( a\right) }(q).$ He established interesting approximations for
several choices of the sequence $\left( a_{n}\right) ,$ such those for which 
$S_{n}^{\left( a\right) }(q)$ is asymptotically equivalent to $%
a_{\left \lfloor n\left( 1-q\right) \right \rfloor },$ where $\left \lfloor
x\right \rfloor $ the largest integer $\leq x,$ see also \cite{Jac}. In 2012,
Cicho\'{n} and Golebiewski \cite{Cic} develop a method to give some other
applications and a generalization to multinomial distributions. The sequence
of polynomials $\left( \binom{n}{k}\left( 1-q\right) ^{k}q^{n-k}\right) $ is
the sequence of Bernstein polynomials which forms basis polynomials and used
to approximate functions based on Weierstrass theorem \cite{Ber}. In
algebraic combinatorics, to write the polynomial $S_{n}(q)$ in the basis $%
\left( q^{j};j\leq n\right) $ for particular cases of the sequence $\left(
a_{n}\right) ,$ several identities related to Bernstein polynomials exist in
the literature. For example, Mneimneh \cite{Mne} proves, on using a
probabilistic analysis, the following%
\begin{equation}
\overset{n}{\underset{k=0}{\sum }}H_{k}\binom{n}{k}\left( 1-q\right)
^{k}q^{n-k}=H_{n}-\overset{n}{\underset{j=.1}{\sum }}\frac{q^{j}}{j},
\label{a}
\end{equation}%
where $\left( H_{n};n\geq 0\right) $ are the harmonic numbers defined by 
\begin{equation*}
H_{0}=0\text{ \ and \ }H_{n}=\overset{n}{\underset{k=.1}{\sum }}\frac{1}{k}%
,\  \ n\geq 1.
\end{equation*}%
This identity appears as a special case of the Boyadzhiev's identity \cite%
{Boy} given as%
\begin{equation}
\overset{n}{\underset{k=0}{\sum }}H_{k}\binom{n}{k}x^{k}y^{n-k}=\left(
x+y\right) ^{n}H_{n}-\left( y\left( x+y\right) ^{n-1}+\frac{y^{2}}{2}\left(
x+y\right) ^{n-2}+\cdots +\frac{y^{n}}{n}\right) .  \label{b}
\end{equation}%
Also, in \cite{Kom}, Komatsu generalizes the identity (\ref{a}) by%
\begin{equation*}
\overset{n}{\underset{k=0}{\sum }}H_{k}^{\left( r\right) }\binom{n}{k}\left(
1-q\right) ^{k}q^{n-k}=H_{n}^{\left( r\right) }-\overset{n}{\underset{j=.1}{%
\sum }}D_{n}\left( r,j\right) \frac{q^{j}}{j},
\end{equation*}%
where%
\begin{equation*}
H_{0}^{\left( r\right) }=0\text{ \ and \ }H_{n}^{\left( r\right) }=\overset{n%
}{\underset{k=.1}{\sum }}\frac{1}{k^{r}},\  \ n\geq 1,\  \ r\geq 1,
\end{equation*}%
and%
\begin{equation*}
D_{n}\left( r,j\right) =\overset{j-1}{\underset{l=.0}{\sum }}\left(
-1\right) ^{j-l-1}\binom{n-l-1}{n-j}\binom{n}{j}\frac{1}{\left( n-l\right)
^{r-1}}.
\end{equation*}%
In this paper, we study the expression $S_{n}(q)=\overset{n}{\underset{k=0}{%
\sum }}a_{k}\binom{n}{k}\left( 1-q\right) ^{k}q^{n-k}$ for different
sequences $\left( a_{n}\right) .$ As it \ is shown above, this expression
has a best representation in the basis $\left( q^{j};j\leq n\right) $ in the
special cases $a_{n}=H_{n}$ and $a_{n}=H_{n}^{\left( r\right) }.$ In our
contribution, we give in the second section, the following representation of 
$S_{n}(q)$ in the basis $\left( q^{j};j\leq n\right) :$%
\begin{equation*}
S_{n}(q)=\overset{n}{\underset{j=0}{\sum }}\binom{n}{j}\left( -1\right)
^{j}\left( \nabla ^{j}a_{n}\right) q^{j}
\end{equation*}%
and we deduce best representations of $S_{n}(q)$ when $\left( a_{n}\right) $
is the generalized Harmonic numbers. We present also some applications of
the main proposition to the sequence of binomial coefficients, the sequence $%
\left( \left[ n\right] _{q}\right) ,$ Fibonacci and Lucas numbers, Meixner
and generalized Laguerre polynomials. In the third section, we give some
properties, relations related to the sequences $\left( S_{n}\left( x\right)
\right) $ and $\left( S_{n}\left( x+q-xq\right) \right) ,$ in which we prove
that the Bernoulli transform of the sequence $\left( S_{n}\left( x\right)
;n\geq 0\right) $ is the sequence $\left( S_{n}\left( x+q-xq\right) \right) ,
$ i.e. 
\begin{equation*}
S_{n}\left( x+q-xq\right) =\overset{n}{\underset{k=0}{\sum }}S_{k}\left(
x\right) \binom{n}{k}\left( 1-q\right) ^{k}q^{n-k}=\overset{n}{\underset{k=0}%
{\sum }}S_{k}\left( q\right) \binom{n}{k}\left( 1-x\right) ^{k}x^{n-k}
\end{equation*}%
with its general case%
\begin{eqnarray*}
\left( 1-q\right) ^{m}\overset{n}{\underset{k=0}{\sum }}S_{k+m}\left(
x\right) \binom{n}{k}\left( 1-q\right) ^{k}q^{n-k} &=&\left( 1-x\right) ^{m}%
\overset{n}{\underset{k=0}{\sum }}S_{k+m}\left( q\right) \binom{n}{k}\left(
1-x\right) ^{k}x^{n-k} \\
&=&\overset{m}{\underset{j=0}{\sum }}\binom{m}{j}\left( -q\right)
^{m-j}S_{j+n}\left( x+q-xq\right) ,
\end{eqnarray*}%
and we give their probabilistic interpretations. In the last section, we
give some application of the Bernoulli transform to Appell polynomials.

\section{A principal elementary identity and its applications}

Let $\left( a_{n};n\geq 0\right) $ be a sequence of real numbers and%
\begin{equation*}
b_{n}=a_{n}-a_{n-1}:=\nabla a_{n},\  \ n\geq 1.
\end{equation*}%
Let%
\begin{eqnarray}
S_{n}(q) &:&=\overset{n}{\underset{k=0}{\sum }}a_{k}\binom{n}{k}\left(
1-q\right) ^{k}q^{n-k}\text{ \ and \ }  \label{g} \\
\overline{S}_{n}(q) &:&=\overset{n}{\underset{k=0}{\sum }}a_{k}\binom{n}{k}%
\left( 1-q\right) ^{n-k}q^{k}=S_{n}(1-q).  \label{n2}
\end{eqnarray}

\begin{proposition}
\label{T1}For any real sequence $\left( a_{n};n\geq 0\right) $ there hold%
\begin{eqnarray}
S_{n}(q) &=&\overset{n}{\underset{j=0}{\sum }}\left( -1\right) ^{j}\binom{n}{%
j}M\left( n,j\right) q^{j}=\left( 1-q\nabla \right) ^{n}a_{n},  \label{c} \\
\overline{S}_{n}(q) &=&\overset{n}{\underset{j=0}{\sum }}\left( -1\right)
^{j}\binom{n}{j}\overline{M}\left( j\right) q^{j},  \label{cbis}
\end{eqnarray}%
where%
\begin{equation}
M\left( n,j\right) =\nabla ^{j}a_{n}=\overset{j}{\underset{l=0}{\sum }}%
\left( -1\right) ^{l}\binom{j}{l}a_{n-l}\text{ \ and \ }\overline{M}\left(
j\right) =\overset{j}{\underset{l=0}{\sum }}\left( -1\right) ^{l}\binom{j}{l}%
a_{l},\  \ j\geq 0.  \label{h}
\end{equation}
\end{proposition}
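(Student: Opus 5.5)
The plan is to treat $\nabla$ and the shift as commuting linear operators on sequences indexed by $n$. Introduce the backward shift $E^{-1}a_{n}=a_{n-1}$, so that $\nabla =1-E^{-1}$ and $E^{-1}=1-\nabla $. With this notation every term of $S_{n}(q)$ can be read off from $a_{n}$:
\begin{equation*}
a_{k}=E^{-(n-k)}a_{n}=(1-\nabla )^{n-k}a_{n}.
\end{equation*}
Substituting into the definition of $S_{n}(q)$ gives
\begin{equation*}
S_{n}(q)=\sum_{k=0}^{n}\binom{n}{k}(1-q)^{k}\bigl(q(1-\nabla )\bigr)^{n-k}a_{n}=\bigl((1-q)+q(1-\nabla )\bigr)^{n}a_{n}=(1-q\nabla )^{n}a_{n}.
\end{equation*}
Here the binomial theorem is legitimate because all operators involved are polynomials in the single operator $\nabla $, and scalars commute with $\nabla $. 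This is the second equality in (\ref{c}). Expanding $(1-q\nabla )^{n}=\sum_{j}\binom{n}{j}(-1)^{j}q^{j}\nabla ^{j}$ then gives the first equality with $M(n,j)=\nabla ^{j}a_{n}$. The explicit formula $\nabla ^{j}a_{n}=\sum_{l}(-1)^{l}\binom{j}{l}a_{n-l}$ in (\ref{h}) is just $(1-E^{-1})^{j}$ expanded.

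The only point needing care is that $\nabla ^{j}a_{n}$ for $j\le n$ involves only $a_{0},\dots ,a_{n}$, so no terms with negative index appear. I expect this to be the main obstacle, and it is a mild one: it amounts to checking that the operator manipulation is rigorous on finite data. If one prefers to avoid operators, the fallback is a direct computation. Write $(1-q)^{k}=\sum_{i}\binom{k}{i}(-q)^{i}$, multiply by $q^{n-k}$, collect the coefficient of $q^{j}$, and use $\binom{n}{k}\binom{k}{i}=\binom{n}{j}\binom{j}{n-k}$ with $j=n-k+i$. Reindexing with $l=n-k$ then produces $(-1)^{j}\binom{n}{j}\sum_{l}(-1)^{l}\binom{j}{l}a_{n-l}$. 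For the sign, the coefficient of $q^{j}$ carries $(-1)^{i}=(-1)^{j-l}=(-1)^{j}(-1)^{l}$.

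For (\ref{cbis}), use $\overline{S}_{n}(q)=S_{n}(1-q)$, or compute directly. Write $q^{k}=\bigl(1-(1-q)\bigr)^{k}$ and argue as above with the roles of $q$ and $1-q$ exchanged, collecting in powers of $q$. More simply, use the forward operator: $a_{k}=E^{k}a_{0}$, so
\begin{equation*}
\overline{S}_{n}(q)=\bigl((1-q)+qE\bigr)^{n}a_{0}=(1+q\Delta )^{n}a_{0},
\end{equation*}
with $\Delta =E-1$. Since $(-1)^{j}\Delta ^{j}a_{0}=\sum_{l}(-1)^{l}\binom{j}{l}a_{l}=\overline{M}(j)$, expanding the power gives $\sum_{j}\binom{n}{j}q^{j}\Delta ^{j}a_{0}=\sum_{j}(-1)^{j}\binom{n}{j}\overline{M}(j)q^{j}$. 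This is exactly (\ref{cbis}).
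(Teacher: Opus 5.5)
Your proof is correct, and your main route is organized differently from the paper's. The paper starts from the right-hand side $\sum_j(-1)^j\binom{n}{j}M(n,j)q^j$. It substitutes the explicit formula for $M(n,j)$, interchanges the sums, reindexes, and collapses the inner sum $\sum_j(-1)^j\binom{l}{j}q^j$ to $(1-q)^l$. The operator form $(1-q\nabla)^n a_n$ is then only a restatement of the expanded sum, and (\ref{cbis}) is left to ``the same way''.

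You instead derive $S_n(q)=(1-q\nabla)^n a_n$ first, from $a_k=(1-\nabla)^{n-k}a_n$ and the binomial theorem for commuting operators, and obtain (\ref{c}) by expanding. Your fallback computation is essentially the paper's argument, run from the left-hand side instead of the right.

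Your approach justifies the operator identity rather than only checking coefficients. It also gives a one-line proof of (\ref{cbis}) via the forward difference, with $\overline{M}(j)=(-1)^j\Delta^j a_0$. This explains why $\overline{M}$ does not depend on $n$: $S_n$ is governed by backward differences at $n$, and $\overline{S}_n$ by forward differences at $0$.

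The cost is the small bookkeeping you point out. $E^{-1}$ must act on sequences extended to negative indices (say by zero). One then checks that evaluating at index $n$ after applying $E^{-m}$ with $m\le n$ never reaches the extension. The paper's finite double-sum manipulation avoids this step.
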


\begin{proof}
We prove that the left hand side of (\ref{c}) equal to its right hand side.
Indeed,%
\begin{eqnarray*}
\overset{n}{\underset{j=0}{\sum }}\left( -1\right) ^{j}\binom{n}{j}M\left(
n,j\right) q^{j} &=&\overset{n}{\underset{j=0}{\sum }}\left( -1\right) ^{j}%
\binom{n}{j}q^{j}\overset{j}{\underset{l=0}{\sum }}\left( -1\right) ^{l}%
\binom{j}{l}a_{n-l} \\
&=&\overset{n}{\underset{l=0}{\sum }}\left( -1\right) ^{l}a_{n-l}\overset{n}{%
\underset{j=l}{\sum }}\left( -1\right) ^{j}\binom{n}{j}\binom{j}{l}q^{j} \\
&=&\overset{n}{\underset{l=0}{\sum }}\left( -1\right) ^{n-l}a_{l}\overset{n}{%
\underset{j=n-l}{\sum }}\left( -1\right) ^{j}\binom{n}{j}\binom{j}{n-l}q^{j}
\\
&=&\overset{n}{\underset{l=0}{\sum }}\left( -1\right) ^{n-l}a_{l}\overset{l}{%
\underset{j=0}{\sum }}\left( -1\right) ^{j+n-l}\binom{n}{j+n-l}\binom{j+n-l}{%
n-l}q^{j+n-l} \\
&=&\overset{n}{\underset{l=0}{\sum }}a_{l}\binom{n}{l}q^{n-l}\overset{l}{%
\underset{j=0}{\sum }}\left( -1\right) ^{j}\binom{l}{j}q^{j} \\
&=&\overset{n}{\underset{l=0}{\sum }}a_{l}\binom{n}{l}q^{n-l}\left(
1-q\right) ^{l} \\
&=&S_{n}(q).
\end{eqnarray*}%
With the same way one can prove the second part of the proposition.
\end{proof}

\begin{remark}
One can observe easily that we have%
\begin{eqnarray}
\nabla M\left( n,k\right) &=&M\left( n,k\right) -M\left( n-1,k\right)
=M\left( n,k+1\right) ,\  \  \ k\geq 1,\ n\geq 1,  \label{i} \\
\nabla ^{j}M\left( n,k\right) &=&M\left( n,k+j\right) ,\  \ j\geq 0.
\label{j}
\end{eqnarray}%
Indeed, from definition, we get%
\begin{eqnarray*}
\nabla M\left( n,k\right) &=&M\left( n,k\right) -M\left( n-1,k\right) \\
&=&\nabla ^{k}a_{n}-\nabla ^{k}a_{n-1} \\
&=&\nabla ^{k}\left( a_{n}-a_{n-1}\right) \\
&=&\nabla ^{k+1}a_{n} \\
&=&M\left( n,k+1\right) ,\  \ k\geq 1.
\end{eqnarray*}
\end{remark}

\noindent Let $\left( H_{n}^{\left( r\right) }\left( x\right) \right) $ be
the sequence defined by%
\begin{equation*}
H_{0}^{\left( r\right) }\left( x\right) =0,\  \  \ H_{n}^{\left( r\right)
}\left( x\right) =\overset{n}{\underset{k=1}{\sum }}\frac{1}{\left(
k+x\right) ^{r}},\  \ n\geq 1,\ r\geq 1,
\end{equation*}%
As a consequence, for $a_{n}=H_{n}^{\left( r\right) }\left( x\right) $ of
Proposition \ref{T1}, we get%
\begin{equation}
\overset{n}{\underset{k=0}{\sum }}H_{k}^{\left( r\right) }\left( x\right) 
\binom{n}{k}\left( 1-q\right) ^{k}q^{n-k}=\overset{n}{\underset{j=0}{\sum }}%
\left( -1\right) ^{j}\binom{n}{j}M\left( n,j\right) q^{j},  \label{k}
\end{equation}%
where 
\begin{equation*}
M\left( n,j\right) =\overset{j}{\underset{l=0}{\sum }}\left( -1\right) ^{l}%
\binom{j}{l}\frac{1}{\left( n-l+x\right) ^{r}},\  \ n\geq 1.
\end{equation*}%
This last application can be deduced from the paper of Komatsu \cite{Kom}.

\begin{remark}
Since $\overline{S}_{n}(q)=S_{n}(1-q),$ then from Proposition \ref{T1} it
follows%
\begin{equation*}
\overset{n}{\underset{j=0}{\sum }}\left( -1\right) ^{j}\binom{n}{j}\overline{%
M}\left( j\right) q^{j}=\overset{n}{\underset{j=0}{\sum }}\left( -1\right)
^{j}\binom{n}{j}M\left( n,j\right) \left( 1-q\right) ^{j}.
\end{equation*}%
So, by identifying the coefficients of $q^{j}$ (respectively $\left(
1-q\right) ^{j}$) we obtain%
\begin{equation}
\overset{n}{\underset{j=0}{\sum }}\left( -1\right) ^{k-j}\binom{n}{j}M\left(
n+k,j+k\right) =\overline{M}\left( k\right) \text{ \ and \ }\overset{n}{%
\underset{j=0}{\sum }}\left( -1\right) ^{k-j}\binom{n}{j}\overline{M}\left(
j+k\right) =M\left( n+k,k\right) .  \label{i1}
\end{equation}
\end{remark}

For the applications of Proposition \ref{T1}, some special cases for the
sequence $\left( a_{n}\right) $ are given by the following corollaries.

\begin{corollary}
For any real numbers $a,b$ and $c$ we have%
\begin{equation}
\overset{n}{\underset{k=0}{\sum }}\mathcal{F}_{k+r}\binom{n}{k}\left(
1-q\right) ^{k}q^{n-k}=\sum_{j=0}^{n}\left( -1\right) ^{j}\binom{n}{j}%
\mathcal{F}_{n+r-2j}\left( cq\right) ^{j},  \label{e}
\end{equation}%
where $\left( \mathcal{F}_{n}\right) $ is the sequence of numbers defined by%
\begin{equation*}
\mathcal{F}_{n+2}=c\mathcal{F}_{n}+\mathcal{F}_{n+1},\  \  \  \mathcal{F}%
_{0}=a,\  \  \  \mathcal{F}_{1}=b,\  \mathcal{F}_{-n-1}=0,\  \ n\geq 0.
\end{equation*}%
In particular, we have%
\begin{eqnarray}
\overset{n}{\underset{k=0}{\sum }}F_{k+r}\binom{n}{k}\left( 1-q\right)
^{k}q^{n-k} &=&\sum_{j=0}^{n}\left( -1\right) ^{j}\binom{n}{j}%
F_{n+r-2j}q^{j},  \label{d} \\
\overset{n}{\underset{k=0}{\sum }}L_{k+r}\binom{n}{k}\left( 1-q\right)
^{k}q^{n-k} &=&\sum_{j=0}^{n}\left( -1\right) ^{j}\binom{n}{j}%
L_{n+r-2j}q^{j},  \label{dbis}
\end{eqnarray}%
where $\left( F_{n}\right) $ and $\left( L_{n}\right) $ are, respectively,
the Fibonacci and Lucas numbers.
\end{corollary}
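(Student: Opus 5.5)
The plan is to apply Proposition \ref{T1} to the shifted sequence $a_{k}=\mathcal{F}_{k+r}$. By (\ref{c}) the left-hand side of (\ref{e}) equals $\sum_{j=0}^{n}(-1)^{j}\binom{n}{j}M(n,j)q^{j}$ with $M(n,j)=\nabla^{j}a_{n}=\nabla^{j}\mathcal{F}_{n+r}$. So everything reduces to the closed form
\begin{equation*}
\nabla^{j}\mathcal{F}_{m}=c^{j}\mathcal{F}_{m-2j},
\end{equation*}
used with $m=n+r$ and $0\leq j\leq n$. Substituting this gives (\ref{e}) at once.

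I would prove the closed form by induction on $j$. The case $j=0$ is trivial. The recurrence, read as $\mathcal{F}_{m}-\mathcal{F}_{m-1}=c\mathcal{F}_{m-2}$, says exactly that $\nabla \mathcal{F}_{m}=c\mathcal{F}_{m-2}$. Since $\nabla$ commutes with shifts and scalar multiples (this is the content of (\ref{i})--(\ref{j})), we get
\begin{equation*}
\nabla^{j}\mathcal{F}_{m}=\nabla^{j-1}\left( c\mathcal{F}_{m-2}\right) =c\,c^{j-1}\mathcal{F}_{m-2-2(j-1)}=c^{j}\mathcal{F}_{m-2j}.
\end{equation*}
The special cases (\ref{d}) and (\ref{dbis}) then follow by taking $c=1$, with $(a,b)=(0,1)$ for Fibonacci and $(a,b)=(2,1)$ for Lucas.

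The main obstacle is the range of indices on which the relation $\nabla \mathcal{F}_{m}=c\mathcal{F}_{m-2}$ is actually valid. Expanding $\nabla^{j}\mathcal{F}_{n+r}$ uses $\mathcal{F}_{n+r-l}$ for $l\leq j$, and the induction applies the recurrence at indices down to about $n+r-2j+2$. When $j$ is close to $n$ and $r$ is small, these indices become $\leq 1$. There the stated convention $\mathcal{F}_{-n-1}=0$ is incompatible with the recurrence unless $a=b$ (resp. $b=a$ at index $1$). Indeed, for $n=1$, $r=0$ the left side is $b-(b-a)q$, while the right side computed with $\mathcal{F}_{-1}=0$ is just $b$.

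I would handle this in one of two ways. The first is to restrict to the range where all indices used are $\geq 0$, i.e.\ $r\geq n$ (or $r$ large enough that $n+r-2j+2\geq 2$ for all $j$). The second, which I prefer, is to extend $(\mathcal{F}_{m})$ to negative indices by the recurrence itself, $\mathcal{F}_{m-2}=(\mathcal{F}_{m}-\mathcal{F}_{m-1})/c$ for $c\neq 0$. Then $\nabla \mathcal{F}_{m}=c\mathcal{F}_{m-2}$ holds for every integer $m$, and (\ref{e}) holds for all $n,r$. This extension gives the correct classical values for Fibonacci and Lucas numbers, e.g.\ $F_{-1}=1$ and $L_{-1}=-1$, so (\ref{d}) and (\ref{dbis}) hold with the standard negative-index convention. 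I would state this caveat explicitly alongside the proof.
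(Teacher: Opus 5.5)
This is the paper's proof: it applies Proposition~\ref{T1} to $a_k=\mathcal{F}_{k+r}$ and asserts $\nabla^{j}\mathcal{F}_{n+r}=c^{j}\mathcal{F}_{n+r-2j}$ for all $j\ge 0$, without the index check you carry out. Your caveat is correct and exposes a real defect in the statement as printed. The convention $\mathcal{F}_{-1}=\mathcal{F}_{-2}=0$ is compatible with the recurrence only if $a=b$ at index $1$ and $a=0$ at index $0$, so with that convention (\ref{d}) and (\ref{dbis}) fail for every $r<n$. Both of your repairs are sound, and the restriction $c\neq 0$ in the second one is essential: for $c=0$, $r=0$, $n\ge 1$ the two sides of (\ref{e}) differ by $(a-b)q^{n}$, whatever values are assigned at negative indices.
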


\begin{proof}
For $a_{n}=\mathcal{F}_{n+r}$ in Proposition \ref{T1} we obtain%
\begin{equation*}
M\left( n,j\right) =\nabla ^{j}\mathcal{F}_{n+r}=c^{j}\mathcal{F}_{n+r-2j},\
\ j\geq 0.
\end{equation*}%
and, consequently, the desired identity follows.
\end{proof}

\begin{corollary}
For any real numbers $\alpha ,$ $\beta ,\ x$ and non-negative integer $m$ we
have%
\begin{eqnarray}
\overset{n}{\underset{k=0}{\sum }}L_{k+r}^{\left( \alpha \right) }\left(
x\right) \binom{n}{k}\left( 1-q\right) ^{k}q^{n-k} &=&\overset{n}{\underset{%
j=0}{\sum }}\left( -1\right) ^{j}\binom{n}{j}L_{n+r}^{\left( \alpha
-j\right) }\left( x\right) q^{j},  \label{l} \\
\overset{n}{\underset{k=0}{\sum }}L_{r}^{\left( \alpha +k\right) }\left(
x\right) \binom{n}{k}\left( 1-q\right) ^{k}q^{n-k} &=&\overset{n}{\underset{%
j=0}{\sum }}\left( -1\right) ^{j}\binom{n}{j}L_{r-j}^{\left( \alpha
+n\right) }\left( x\right) q^{j}  \label{m}
\end{eqnarray}%
and%
\begin{equation}
\overset{n}{\underset{k=0}{\sum }}M_{k+r}\left( x;\alpha ,\beta \right) 
\binom{n}{k}\left( 1-q\right) ^{k}q^{n-k}=\overset{n}{\underset{j=0}{\sum }}%
\left( -1\right) ^{j}\binom{n}{j}\frac{\left( \beta -1\right) ^{j}\left(
x+1\right) _{j}}{\left( \alpha -j-1\right) _{j}\beta ^{j}}M_{n+r-j}\left(
x;\alpha +j,\beta \right) q^{j},  \label{n}
\end{equation}%
where $\left( L_{n}^{\left( \alpha \right) }\left( x\right) \right) $ are
the generalized Laguerre polynomials and, $\left( M_{n}\left( x;\alpha
,\beta \right) \right) $ are the Meixner polynomials.
\end{corollary}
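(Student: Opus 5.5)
The plan is to read each of the three identities as a special case of Proposition \ref{T1}. For a suitable sequence $(a_{k})$ the left-hand side is exactly $S_{n}(q)$, so by (\ref{c}) it suffices to compute $M(n,j)=\nabla^{j}a_{n}$ in closed form. In every case I would first find a formula for the single difference $\nabla a_{n}$ that has the same shape as $a_{n}$ but with shifted parameters. Iterating it, with (\ref{j}) or a direct induction on $j$, then gives $\nabla^{j}a_{n}$. Substituting into (\ref{c}) yields the stated right-hand sides.

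For (\ref{l}) take $a_{k}=L_{k+r}^{(\alpha)}(x)$. The key fact is the classical relation $L_{m}^{(\alpha)}(x)-L_{m-1}^{(\alpha)}(x)=L_{m}^{(\alpha-1)}(x)$. It follows from the generating function $\sum_{m}L_{m}^{(\alpha)}(x)t^{m}=(1-t)^{-\alpha-1}e^{-xt/(1-t)}$ by multiplying by $(1-t)$. Hence $\nabla a_{n}=L_{n+r}^{(\alpha-1)}(x)$. Since $\nabla$ acts only on the degree index and the relation holds for every $\alpha$, induction gives $\nabla^{j}a_{n}=L_{n+r}^{(\alpha-j)}(x)$, which is (\ref{l}).

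For (\ref{m}) take $a_{k}=L_{r}^{(\alpha+k)}(x)$. The same relation, read with $m=r$ and parameter $\alpha+n$, gives $L_{r}^{(\alpha+n)}-L_{r}^{(\alpha+n-1)}=L_{r-1}^{(\alpha+n)}$, so $\nabla a_{n}=L_{r-1}^{(\alpha+n)}(x)$. Iterating in the same way, the upper parameter stays $\alpha+n$ and the degree drops by one each time. Thus $\nabla^{j}a_{n}=L_{r-j}^{(\alpha+n)}(x)$, with the convention $L_{m}=0$ for $m<0$, which covers $j>r$ in the sum.

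For (\ref{n}) take $a_{k}=M_{k+r}(x;\alpha,\beta)$ and write it as a hypergeometric series ${}_{2}F_{1}(-m,-x;\alpha;z)$ with $z=1-1/\beta=(\beta-1)/\beta$. I would compute the difference termwise, using $(-m)_{k}-(-m+1)_{k}=-k(-m+1)_{k-1}$, and then shift the summation index by one. This produces a first-order relation of the form
\begin{equation*}
\nabla M_{m}(x;\alpha,\beta)=\frac{\beta-1}{\beta}\cdot\frac{(\text{linear factor in }x)}{(\text{linear factor in }\alpha)}\,M_{m-1}(\cdot;\alpha+1,\beta).
\end{equation*}
Iterating it $j$ times accumulates the factor $\left(\frac{\beta-1}{\beta}\right)^{j}$, a rising factorial in $x$, and a Pochhammer product in $\alpha$ in the denominator, with the degree lowered by $j$ and $\alpha$ raised by $j$. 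This is the main obstacle. The naive computation shifts the variable $x$ as well as $\alpha$. To match the stated form, with the factor $(x+1)_{j}/(\alpha-j-1)_{j}$ and argument $x$ unchanged, I would pin down the paper's normalization of the Meixner polynomials. If needed I would use instead the equivalent series in which the roles of $n$ and $x$ are interchanged, or a contiguous relation of ${}_{2}F_{1}$ that trades the shift in $x$ for the stated rational factor. Checking the case $j=1$ directly against small $n$ is how I would confirm the exact constant before doing the induction.
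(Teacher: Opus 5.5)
For (\ref{l}) and (\ref{m}) your argument is correct and follows the paper's route: Proposition \ref{T1} plus a closed form for $\nabla^{j}a_{n}$. The only difference is that the paper quotes $\nabla^{j}L_{n+r}^{(\alpha)}=L_{n+r}^{(\alpha-j)}$ and $\nabla^{j}L_{r}^{(\alpha+n)}=L_{r-j}^{(\alpha+n)}$ from \cite{Rob}. You derive both from $L_{m}^{(\alpha)}-L_{m-1}^{(\alpha)}=L_{m}^{(\alpha-1)}$, which makes this part self-contained.

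For (\ref{n}) there is a genuine gap, and the remedies you propose cannot close it. Your termwise computation is right. With $M_{m}(x;\alpha,\beta)={}_{2}F_{1}(-m,-x;\alpha;1-1/\beta)$ it gives $\nabla M_{m}(x;\alpha,\beta)=\frac{(\beta-1)x}{\beta\alpha}M_{m-1}(x-1;\alpha+1,\beta)$. Iterating produces a \emph{falling} factorial $x(x-1)\cdots(x-j+1)$ over $(\alpha)_{j}$, with argument $x-j$.

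No renormalization or contiguous relation turns this into the printed right-hand side, whatever the normalization (as long as $M_0\neq 0$):
\begin{itemize}
\item Suppose the $n=1$ case of (\ref{n}) holds for all $r$ and $\alpha$. Then $\nabla M_{m}(x;\alpha,\beta)=c(\alpha)M_{m-1}(x;\alpha+1,\beta)$ with $c(\alpha)=\frac{(\beta-1)(x+1)}{\beta(\alpha-2)}$.
\item Applying $\nabla$ again gives $\nabla^{2}M_{r+2}(x;\alpha,\beta)=c(\alpha)c(\alpha+1)M_{r}(x;\alpha+2,\beta)$, with coefficient $\frac{(\beta-1)^{2}(x+1)^{2}}{\beta^{2}(\alpha-2)(\alpha-1)}$.
\item The $n=2$ case of (\ref{n}) instead claims the coefficient $\frac{(\beta-1)^{2}(x+1)(x+2)}{\beta^{2}(\alpha-3)(\alpha-2)}$.
\item Since the coefficients of $q^{j}$ in (\ref{c}) are unique, this forces $M_{r}(x;\alpha+2,\beta)=0$ for generic parameters, which is absurd at $r=0$.
\end{itemize}
So (\ref{n}) is false as printed. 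The $\nabla^{j}M_{n+r}$ formula that the paper imports from \cite{Rob} cannot be right as written. Concretely, for $n=1$, $r=0$ and the standard normalization, the coefficient of $q$ is $-\frac{(\beta-1)x}{\beta\alpha}$, not $-\frac{(\beta-1)(x+1)}{\beta(\alpha-2)}$. The small-$n$ check you planned would have exposed this.

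What your method actually proves, and what you should state and prove instead of hedging, is
\[
\sum_{k=0}^{n}M_{k+r}(x;\alpha,\beta)\binom{n}{k}(1-q)^{k}q^{n-k}=\sum_{j=0}^{n}\binom{n}{j}\frac{(-x)_{j}}{(\alpha)_{j}}\Big(\frac{\beta-1}{\beta}\Big)^{j}M_{n+r-j}(x-j;\alpha+j,\beta)\,q^{j}.
\]
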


\begin{proof}
For $a_{n}=L_{n+r}^{\left( \alpha \right) }\left( x\right) $ in Proposition %
\ref{T1} we obtain \cite[Eq. 18]{Rob}%
\begin{equation*}
M\left( n,j\right) =\nabla ^{j}L_{n+r}^{\left( \alpha \right) }\left(
x\right) =L_{n+r}^{\left( \alpha -j\right) }\left( x\right) ,\  \ j\geq 0,
\end{equation*}%
for $a_{n}=L_{r}^{\left( \alpha +n\right) }\left( x\right) $ in Proposition %
\ref{T1} we obtain \cite[Eq. 21]{Rob}%
\begin{equation*}
M\left( n,j\right) =\nabla ^{j}L_{r}^{\left( \alpha +n\right) }\left(
x\right) =L_{r-j}^{\left( \alpha +n\right) }\left( x\right) ,\  \ j\geq 0,
\end{equation*}%
and for $a_{n}=M_{n+r}\left( x;\alpha ,\beta \right) $ in Proposition \ref%
{T1} we obtain \cite[Eq. 48]{Rob}%
\begin{equation*}
M\left( n,j\right) =\nabla ^{j}M_{n+r}\left( x;\alpha ,\beta \right) =\frac{%
\left( \beta -1\right) ^{j}\left( x+1\right) _{j}}{\left( \alpha -j-1\right)
_{j}\beta ^{j}}M_{n+r-j}\left( x;\alpha +j,\beta \right) ,\  \text{ }j\geq 0,
\end{equation*}%
and, consequently, the desired identity follows.
\end{proof}

\begin{corollary}
For any real number $\alpha $ and any integer $r\geq 0$ we have%
\begin{eqnarray}
\overset{n}{\underset{k=0}{\sum }}\left( -1\right) ^{k}\binom{\alpha }{k+r}%
\binom{n}{k}\left( 1-q\right) ^{k}q^{n-k} &=&\overset{n}{\underset{j=0}{\sum 
}}\left( -1\right) ^{n-j}\binom{n}{j}\binom{j+\alpha }{n+r}q^{j},  \label{p}
\\
\overset{n}{\underset{k=0}{\sum }}\left( -1\right) ^{k}\binom{\alpha }{k+r}%
\binom{n}{k}\left( 1-q\right) ^{n-k}q^{k} &=&\overset{n}{\underset{j=0}{\sum 
}}\left( -1\right) ^{j}\binom{n}{j}\binom{j+\alpha }{j+r}q^{j}.  \label{pbis}
\end{eqnarray}%
In particular, for $\alpha =n$ and $r=0,$ we get%
\begin{eqnarray}
\overset{n}{\underset{k=0}{\sum }}\left( -1\right) ^{k}\binom{n}{k}%
^{2}\left( 1-q\right) ^{k}q^{n-k} &=&\overset{n}{\underset{j=0}{\sum }}%
\left( -1\right) ^{n-j}\binom{n}{j}\binom{j+n}{n}q^{j},  \label{q} \\
\overset{n}{\underset{k=0}{\sum }}\left( -1\right) ^{k}\binom{n}{k}%
^{2}\left( 1-q\right) ^{n-k}q^{k} &=&\overset{n}{\underset{j=0}{\sum }}%
\left( -1\right) ^{j}\binom{n}{j}\binom{j+n}{n}q^{j}.  \label{qbis}
\end{eqnarray}%
We also have%
\begin{equation}
\overset{n}{\underset{j=0}{\sum }}\left( -1\right) ^{n-j}\binom{n}{j}\binom{%
j+k+\alpha }{n+k+r}=\binom{k+\alpha }{k+r}\text{ \ and \ }\overset{n}{%
\underset{j=0}{\sum }}\left( -1\right) ^{n-j}\binom{n}{j}\binom{j+k+\alpha }{%
j+k+r}=\binom{k+\alpha }{n+k+r}.  \label{j1}
\end{equation}
\end{corollary}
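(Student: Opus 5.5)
We apply Proposition \ref{T1} with $a_{n}=(-1)^{n}\binom{\alpha }{n+r}$. The whole argument then rests on evaluating the backward differences $M(n,j)=\nabla ^{j}a_{n}$ and the forward-type sums $\overline{M}(j)$ in closed form.

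\textbf{Computing $M(n,j)$.} The basic step is the Pascal rule:
\begin{equation*}
\nabla \left[ (-1)^{n}\binom{\beta }{n+r}\right] =(-1)^{n}\left[ \binom{\beta }{n+r}+\binom{\beta }{n+r-1}\right] =(-1)^{n}\binom{\beta +1}{n+r}.
\end{equation*}
Iterating this $j$ times gives
\begin{equation*}
M(n,j)=(-1)^{n}\binom{\alpha +j}{n+r}.
\end{equation*}
This is valid for real $\alpha$, with $a_{n-l}$ interpreted as $0$ when $n-l+r<0$, which is consistent with $\binom{\alpha }{m}=0$ for $m<0$. Substituting into (\ref{c}) gives $\sum_{j}(-1)^{j+n}\binom{n}{j}\binom{j+\alpha }{n+r}q^{j}$, which is (\ref{p}).

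\textbf{Computing $\overline{M}(j)$.} Here we need
\begin{equation*}
\overline{M}(j)=\sum_{l=0}^{j}(-1)^{l}\binom{j}{l}(-1)^{l}\binom{\alpha }{l+r}=\sum_{l}\binom{j}{l}\binom{\alpha }{r+l}.
\end{equation*}
Writing $\binom{j}{l}=\binom{j}{j-l}$, Vandermonde's convolution gives $\binom{\alpha +j}{r+j}$. Substituting into (\ref{cbis}) yields (\ref{pbis}). The special cases (\ref{q}) and (\ref{qbis}) follow by setting $\alpha =n$, $r=0$.

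\textbf{The identities (\ref{j1}).} These come from the coefficient identities (\ref{i1}) of the preceding Remark, applied to our sequence. First we need
\begin{equation*}
M(n+k,j+k)=(-1)^{n+k}\binom{\alpha +j+k}{n+k+r}\quad \text{and}\quad \overline{M}(k)=\binom{\alpha +k}{k+r}.
\end{equation*}
The first relation in (\ref{i1}) then reads $\sum_{j}(-1)^{k-j}\binom{n}{j}(-1)^{n+k}\binom{j+k+\alpha }{n+k+r}=\binom{k+\alpha }{k+r}$, and the signs combine to $(-1)^{n-j}$, which is the first part of (\ref{j1}).

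The second relation in (\ref{i1}) gives $\sum_{j}(-1)^{k-j}\binom{n}{j}\binom{j+k+\alpha }{j+k+r}=(-1)^{k}\binom{\alpha +k}{n+k+r}$. Rearranged, this is $\sum_{j}(-1)^{j}\binom{n}{j}\binom{j+k+\alpha }{j+k+r}=\binom{k+\alpha }{n+k+r}$. Comparing with the stated form, the sign convention may need adjusting by a factor $(-1)^{n}$, so I would recheck it.

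\textbf{Independent verification.} Since the derivation of (\ref{i1}) is only sketched in the paper, I would also verify (\ref{j1}) directly. The first identity is an $n$-th finite difference in $j$ of $\binom{j+k+\alpha }{n+k+r}$, which Pascal's rule reduces to $\binom{k+\alpha }{k+r}$. The second can be proved by induction on $n$ using the same Pascal-type recursion.

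\textbf{Main obstacle.} The main difficulty is bookkeeping: keeping track of signs and boundary terms where $n+r-l<0$, and fixing the exact sign in the second part of (\ref{j1}). The binomial manipulations themselves are routine.
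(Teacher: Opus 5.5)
Your derivations of (\ref{p}), (\ref{pbis}), (\ref{q}), (\ref{qbis}) and of the first identity in (\ref{j1}) are correct and follow the paper's route. There is one immaterial difference: the paper gets $M(n,j)=(-1)^{n}\binom{j+\alpha}{n+r}$ by applying Vandermonde to $\sum_{l}\binom{j}{l}\binom{\alpha}{r+n-l}$ rather than by iterating Pascal's rule. Your worry about indices with $n-l+r<0$ is moot, because only $j\le n$ enters (\ref{c}), so every $a_{n-l}$ that appears has $n-l\ge 0$.

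The gap is the second identity in (\ref{j1}). Your proposal does not establish it and in fact contradicts it. By your own formula, $M(n+k,k)=(-1)^{n+k}\binom{\alpha+k}{n+k+r}$, not $(-1)^{k}\binom{\alpha+k}{n+k+r}$: you dropped the factor $(-1)^{n}$. With it restored, the second relation of (\ref{i1}) reads
\[
\sum_{j=0}^{n}(-1)^{k-j}\binom{n}{j}\binom{j+k+\alpha}{j+k+r}=(-1)^{n+k}\binom{k+\alpha}{n+k+r}.
\]
Multiplying by $(-1)^{n+k}$ gives exactly the stated form with $(-1)^{n-j}$, which is also how the paper concludes. So the sign in the statement is right.

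The ``rearranged'' identity you wrote, $\sum_{j}(-1)^{j}\binom{n}{j}\binom{j+k+\alpha}{j+k+r}=\binom{k+\alpha}{n+k+r}$, is false for odd $n$. For $n=1$, $k=r=0$ its left side is $1-(1+\alpha)=-\alpha$, while the right side is $\alpha$.

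Your proposed independent check would have caught this if you had carried it out. Set $\beta=k+\alpha$ and $s=k+r$. Pascal's rule gives $\binom{j+1+\beta}{j+1+s}-\binom{j+\beta}{j+s}=\binom{j+\beta}{j+s+1}$. Each forward difference in $j$ therefore raises the lower index by one. The $n$-th difference at $j=0$, namely $\sum_{j}(-1)^{n-j}\binom{n}{j}\binom{j+k+\alpha}{j+k+r}$, thus equals $\binom{k+\alpha}{n+k+r}$, confirming the statement as written.
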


\begin{proof}
For $a_{n}=\left( -1\right) ^{n}\binom{\alpha }{n+r}$ in Proposition \ref{T1}
we we get%
\begin{eqnarray*}
M\left( n,j\right) &=&\nabla ^{j}a_{n}=\overset{j}{\underset{l=0}{\sum }}%
\left( -1\right) ^{l}\binom{j}{l}a_{n-l}=\left( -1\right) ^{n}\overset{j}{%
\underset{l=0}{\sum }}\binom{j}{l}\binom{\alpha }{r+n-l}=\left( -1\right)
^{n}\binom{j+\alpha }{n+r}, \\
\overline{M}\left( j\right) &=&\overset{j}{\underset{l=0}{\sum }}\left(
-1\right) ^{l}\binom{j}{l}a_{l}=\overset{j}{\underset{l=0}{\sum }}\binom{j}{l%
}\binom{\alpha }{r+l}=\overset{j}{\underset{l=0}{\sum }}\binom{j}{j-l}\binom{%
\alpha }{r+l}=\binom{j+\alpha }{j+r},
\end{eqnarray*}%
and, consequently, the identities (\ref{q}) and (\ref{qbis}) follow. The
last identity follows from (\ref{i1}).
\end{proof}

\begin{corollary}
For $m\geq n\geq 0$ and $s\geq 0$ we have 
\begin{equation}
\overset{n}{\underset{k=0}{\sum }}\frac{k}{sm+k}\binom{sm+k}{m}\binom{n}{k}%
\left( 1-q\right) ^{k}q^{n-k}=\overset{n}{\underset{j=0}{\sum }}\left(
-1\right) ^{j}\binom{n}{j}\frac{n}{s\left( m-j\right) +n}\binom{sm+n-j}{m-j}%
q^{j},\  \ m\geq n.  \label{b3}
\end{equation}
\end{corollary}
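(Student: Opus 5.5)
The plan is to apply Proposition \ref{T1} with
\[
a_k=\frac{k}{sm+k}\binom{sm+k}{m},\qquad 0\le k\le n\le m .
\]
By (\ref{c}), the whole statement then reduces to a closed form for $M(n,j)=\nabla^{j}a_{n}$ for $0\le j\le n$. No further summation manipulation is needed, exactly as in the Fibonacci, Laguerre and binomial corollaries.

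The key tool I would use to compute $\nabla^{j}a_n$ is the generalized binomial series $\mathcal{B}_s(z)$. It is defined by $\mathcal{B}_s(z)=1+z\,\mathcal{B}_s(z)^{s}$, and by Lagrange inversion it satisfies
\[
[z^{m}]\,\mathcal{B}_s(z)^{t}=\frac{t}{sm+t}\binom{sm+t}{m}.
\]
Hence $a_k=[z^m]\mathcal{B}_s(z)^k$ for every $k\ge 0$; for $k=0$ both sides vanish when $m\ge1$. Since $\nabla$ acts on the exponent, $\nabla a_n=[z^m]\mathcal{B}_s^{\,n-1}(\mathcal{B}_s-1)$. Iterating gives
\[
\nabla^{j}a_n=[z^m]\,\mathcal{B}_s^{\,n-j}(\mathcal{B}_s-1)^{j}=[z^{m-j}]\,\mathcal{B}_s^{\,n-j+sj},
\]
using $\mathcal{B}_s-1=z\mathcal{B}_s^{s}$. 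Applying the Lagrange formula once more yields
\[
\nabla^{j}a_n=\frac{n+(s-1)j}{sm+n-j}\binom{sm+n-j}{m-j},
\]
and substituting this into (\ref{c}) produces an identity of the shape of (\ref{b3}). The hypothesis $m\ge n$ guarantees $m-j\ge0$ throughout, so no boundary terms arise.

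The step I expect to be the real obstacle is reconciling this closed form with the coefficient printed in (\ref{b3}), namely $\frac{n}{s(m-j)+n}\binom{sm+n-j}{m-j}$. The two agree for $s=0$, where both reduce to $\binom{n-j}{m-j}$, and for $s=1$. They also agree whenever $j\le1$ and $m=1$. They do not agree in general.

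For example, take $s=m=2$. Then $a_k=k(k+3)/2$, so $\nabla a_n=n+1$, which is what my formula gives. The printed coefficient instead gives $n(n+3)/(n+2)$, which is not even an integer. So before finishing I would verify the small cases numerically and, if the discrepancy persists, state the corollary with the coefficient
\[
\frac{n+(s-1)j}{sm+n-j}\binom{sm+n-j}{m-j}
\]
obtained above. Alternatively I would look for a different intended normalization of $a_k$ under which the printed form holds.

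As a fallback that avoids generating functions, the formula for $\nabla^{j}a_n$ can also be proved by induction on $j$. This uses (\ref{i}), together with the elementary check that the proposed expression satisfies the recurrence $f(n,j)-f(n-1,j)=f(n,j+1)$, which is a routine Pascal-type computation.
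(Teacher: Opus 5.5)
Your computation of $\nabla^{j}a_n$ is correct, and so is your objection, which you can state outright: (\ref{b3}) as printed is false. The expansion of the left side in powers of $q$ is unique, so (\ref{b3}) would force $\nabla^{j}a_n=\frac{n}{s(m-j)+n}\binom{sm+n-j}{m-j}$. The true value is your $\frac{n+(s-1)j}{sm+n-j}\binom{sm+n-j}{m-j}$. Cross-multiplying the two prefactors gives
\[
\bigl(n+(s-1)j\bigr)\bigl(s(m-j)+n\bigr)-n(sm+n-j)=s(s-1)\,j\,(m-j).
\]
Hence the printed version holds only when $s\in\{0,1\}$, $n=0$ or $m=1$, and it fails for all $s\ge 2$, $m\ge 2$, $n\ge 1$. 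Your example already settles this: for $s=m=2$, $n=1$, the left side is $2-2q$ while the printed right side is $2-\tfrac{4}{3}q$.

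The paper's own proof does not support the printed formula either. It derives $M(n,j)=A_{m-j}\bigl(s,j(s-1)+n\bigr)$. Substituting $t=n+(s-1)j$ into $A_{m-j}(s,t)=\frac{t}{s(m-j)+t}\binom{s(m-j)+t}{m-j}$ gives exactly your coefficient. So (\ref{b3}) contains a transcription error, and the same slip breaks the matching $q\to 1$ identity in the later remark, which fails at $s=m=2$, $n=1$. What you and the paper actually prove is the corrected identity.

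Your outline matches the paper's: apply Proposition \ref{T1} and find a closed form for $\nabla^{j}a_n$. The difference is how you get that closed form.

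\begin{itemize}
\item The paper quotes the Goy--Shattuck recurrence $A_m(s,n)=A_m(s,n-1)+A_{m-1}(s,s+n-1)$, reads off $M(n,1)$, and jumps to general $j$ with an ``i.e.''. That step tacitly needs an induction on $j$ via (\ref{i}), with the second argument of $A$ shifting by $s-1$ at each step.
\item Your series argument is self-contained. Extracting $[z^m]$ from $\mathcal{B}_s^{\,n}-\mathcal{B}_s^{\,n-1}=z\,\mathcal{B}_s^{\,n-1+s}$ \emph{is} the cited recurrence. Raising $\mathcal{B}_s-1=z\,\mathcal{B}_s^{\,s}$ to the $j$-th power performs all $j$ iterations at once, and Lagrange inversion extracts the coefficient.
\end{itemize}

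The paper's proof is shorter only because it outsources the recurrence. Your argument, or your fallback induction on $j$, supplies the step the paper leaves implicit.

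The only loose ends are the $0/0$ expressions. These occur at $s=0$ (for $a_0$, or when $n=j$) and at $m=n=0$. Read them as $[z^{m}]\mathcal{B}_s(z)^{t}$, which equals $\binom{t}{m}$ when $s=0$.
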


\begin{proof}
For $a_{n}:=A_{m}(s,n)=\frac{n}{sm+n}\binom{sm+n}{m}$ in Proposition \ref{T1}
and from the recurrence relation \cite{Goy}:%
\begin{equation*}
A_{m}(s,n)=A_{m}(s,n-1)+A_{m-1}(s,s+n-1),\  \  \ m\geq 1,
\end{equation*}%
we get $M\left( n,0\right) =A_{m}(s,n),\  \ M\left( n,1\right) =\nabla
a_{n}=A_{m-1}(s,s+n-1),$ i.e.%
\begin{equation*}
M\left( n,j\right) =\nabla ^{j}a_{n}=A_{m-j}(s,j\left( s-1\right) +n),
\end{equation*}%
and, consequently, the desired identity follows.
\end{proof}

\begin{corollary}
For any integer $r\geq 0$ we have%
\begin{equation}
\sum_{k=0}^{n}\left[ k+r\right] _{p}\binom{n}{k}(1-q)^{k}q^{\,n-k}=\frac{1}{%
1-p}\left( 1-p^{\,r}\left( p+q(1-p)\right) ^{n}\right) ,\  \ p\neq 1,
\label{r}
\end{equation}%
where%
\begin{equation*}
\left[ n\right] _{p}:=1+p+\cdots +p^{n-1}=\left( 1-p^{n}\right) /\left(
1-p\right) \text{ if }p\neq 1\text{ and }\left[ n\right] _{1}=n.
\end{equation*}
\end{corollary}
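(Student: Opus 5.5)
Following the pattern of the previous corollaries, we apply Proposition \ref{T1} with $a_{n}=\left[ n+r\right] _{p}$ and $p\neq 1$. First we compute the backward differences. Since $a_{n}=\left( 1-p^{n+r}\right) /\left( 1-p\right)$, the constant term is killed by $\nabla$ and
\begin{equation*}
\nabla a_{n}=\frac{p^{n+r-1}-p^{n+r}}{1-p}=p^{n+r-1}.
\end{equation*}
Applying $\nabla$ to a geometric sequence $p^{m}$ gives $p^{m}-p^{m-1}=-p^{m-1}\left( 1-p\right)$. By induction we therefore get, for $j\geq 1$,
\begin{equation*}
M\left( n,j\right) =\nabla ^{j}a_{n}=\left( -1\right) ^{j-1}\left( 1-p\right) ^{j-1}p^{n+r-j},
\end{equation*}
together with $M\left( n,0\right) =a_{n}$.

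Next we substitute into (\ref{c}). We separate the term $j=0$, which equals $\left( 1-p^{n+r}\right) /\left( 1-p\right)$. The remaining terms give
\begin{equation*}
\sum_{j=1}^{n}\left( -1\right) ^{j}\binom{n}{j}\left( -1\right) ^{j-1}\left( 1-p\right) ^{j-1}p^{n+r-j}q^{j}=-\frac{p^{r}}{1-p}\sum_{j=1}^{n}\binom{n}{j}\left( q\left( 1-p\right) \right) ^{j}p^{n-j}.
\end{equation*}
By the binomial theorem the last sum is $\left( p+q\left( 1-p\right) \right) ^{n}-p^{n}$.

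Adding the two contributions, the terms $\pm p^{n+r}/\left( 1-p\right)$ cancel. What remains is $\frac{1}{1-p}\left( 1-p^{r}\left( p+q\left( 1-p\right) \right) ^{n}\right)$, which is (\ref{r}).

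There is also a direct route that bypasses the proposition, and it serves as a check. Writing $\left[ k+r\right] _{p}=\left( 1-p^{r}p^{k}\right) /\left( 1-p\right)$ and using $\sum_{k}\binom{n}{k}\left( 1-q\right) ^{k}q^{n-k}=1$ together with $\sum_{k}\binom{n}{k}\left( p\left( 1-q\right) \right) ^{k}q^{n-k}=\left( p+q-pq\right) ^{n}$ gives the same result immediately.

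The only step needing care is the sign and exponent bookkeeping in $\nabla ^{j}a_{n}$, in particular the special treatment of $j=0$. The direct binomial computation confirms that this bookkeeping is right.
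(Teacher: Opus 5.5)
Your proof is correct and is essentially the paper's argument: Proposition~\ref{T1} with $a_n=[n+r]_p$, the $j=0$ term split off, and the binomial theorem for the remaining sum. Your formula $\nabla^j a_n=(-1)^{j-1}(1-p)^{j-1}p^{n+r-j}$ is the correct one, whereas the paper's displayed $M(n,j)=p^{n+r-j}(1-p)^{j-1}$ drops the factor $(-1)^{j-1}$. So your sign bookkeeping is cleaner than the printed version, and writing the sum as $\sum_j\binom{n}{j}(q(1-p))^j p^{n-j}$ also avoids the paper's division by $p$.
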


\begin{proof}
For $a_{n}=\left[ n+r\right] _{p}$ in Proposition \ref{T1} we obtain%
\begin{eqnarray*}
M\left( n,0\right) &=&\left[ n+r\right] _{p}, \\
M\left( n,j\right) &=&\Delta ^{j}\left[ n+r\right] _{p}=p^{%
\,n+r-j}(1-p)^{j-1},\  \text{ }j\geq 1,\ p\neq 1,
\end{eqnarray*}%
and%
\begin{eqnarray*}
\sum_{k=0}^{n}\left[ k+r\right] _{p}\binom{n}{k}(1-q)^{k}q^{\,n-k} &=&\left[
n+r\right] _{p}+\overset{n}{\underset{j=1}{\sum }}\left( -1\right) ^{j}%
\binom{n}{j}p^{\,n+r-j}(1-p)^{j-1}q^{j} \\
&=&\frac{1-p^{n+r}}{1-p}+\frac{p^{\,n+r}}{1-p}\overset{n}{\underset{j=1}{%
\sum }}\binom{n}{j}\left( \frac{q(1-p)}{p}\right) ^{j} \\
&=&\frac{1}{1-p}\left( 1-p^{\,r}\left( p+q(1-p)\right) ^{n}\right) ,
\end{eqnarray*}%
and, consequently, the desired identity follows.
\end{proof}

\begin{corollary}
Let $\alpha \neq 0$ be a real number. We have%
\begin{eqnarray}
x\overset{n}{\underset{k=0}{\sum }}\left( -1\right) ^{k}\mathcal{B}%
_{k}\left( x\right) \binom{n}{k}\left( 1-q\right) ^{n-k}q^{k} &=&\overset{n}{%
\underset{j=0}{\sum }}\left( -1\right) ^{j}\binom{n}{j}\mathcal{B}%
_{j+1}\left( x\right) q^{j},  \label{l1} \\
x\overset{n}{\underset{k=0}{\sum }}\left( -1\right) ^{k}w_{k}\left( x\right) 
\binom{n}{k}\left( 1-q\right) ^{n-k}q^{k} &=&\left( x+1\right) \overset{n}{%
\underset{j=0}{\sum }}\binom{n}{j}w_{j}\left( x\right) \left( -q\right)
^{j}-1,  \label{p1}
\end{eqnarray}%
where $\mathcal{B}_{n}\left( x\right) $ and $w_{n}\left( x\right) $ are,
respectively, the $n$-th single variable Bell and the geometric polynomials,
i.e.%
\begin{equation*}
\mathcal{B}_{n}\left( x\right) =\overset{n}{\underset{j=0}{\sum }}\QATOPD \{
\} {n}{j}x^{j}\text{ \ and \ }w_{n}\left( x\right) =\overset{n}{\underset{j=0%
}{\sum }}j!\QATOPD \{ \} {n}{j}x^{j}
\end{equation*}%
with $\QATOPD \{ \} {n}{j}$ is the $\left( n,j\right) $-th Stirling number
of second kind \cite{Com}.
\end{corollary}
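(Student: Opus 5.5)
The plan is to apply the second identity (\ref{cbis}) of Proposition \ref{T1} to the sequences $a_{k}=\left( -1\right) ^{k}\mathcal{B}_{k}\left( x\right) $ and $a_{k}=\left( -1\right) ^{k}w_{k}\left( x\right) $. In both cases the sign $\left( -1\right) ^{l}$ inside $\overline{M}\left( j\right) $ cancels the sign of $a_{l}$, so
\begin{equation*}
\overline{M}\left( j\right) =\overset{j}{\underset{l=0}{\sum }}\binom{j}{l}\mathcal{B}_{l}\left( x\right) \quad \text{or} \quad \overline{M}\left( j\right) =\overset{j}{\underset{l=0}{\sum }}\binom{j}{l}w_{l}\left( x\right) ,
\end{equation*}
the binomial transform of the polynomial sequence. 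Each identity then reduces to evaluating $x\overline{M}\left( j\right) $ in closed form and substituting into (\ref{cbis}) after multiplying by $x$.

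For the Bell polynomials I will use the exponential generating function $\sum_{n\geq 0}\mathcal{B}_{n}\left( x\right) t^{n}/n!=\exp \left( x\left( e^{t}-1\right) \right) $. Differentiating in $t$ gives $xe^{t}\exp \left( x\left( e^{t}-1\right) \right) $. Comparing coefficients of $t^{j}/j!$, the left side gives $\mathcal{B}_{j+1}\left( x\right) $, and the right side gives $x\sum_{l}\binom{j}{l}\mathcal{B}_{l}\left( x\right) $, since multiplying by $e^{t}$ is the binomial convolution. Hence $x\overline{M}\left( j\right) =\mathcal{B}_{j+1}\left( x\right) $, and (\ref{l1}) follows at once from (\ref{cbis}).

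For the geometric polynomials I will use $\sum_{n\geq 0}w_{n}\left( x\right) t^{n}/n!=1/\left( 1-x\left( e^{t}-1\right) \right) $. The exponential generating function of $x\overline{M}\left( j\right) $ is then $xe^{t}/\left( 1-x\left( e^{t}-1\right) \right) $. A one-line check shows this equals
\begin{equation*}
\frac{1+x}{1-x\left( e^{t}-1\right) }-1 .
\end{equation*}
Reading off coefficients gives $x\overline{M}\left( j\right) =\left( x+1\right) w_{j}\left( x\right) -\delta _{j,0}$. Inserting this into (\ref{cbis}), the Kronecker term contributes only at $j=0$, which produces the constant $-1$ in (\ref{p1}). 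The remaining terms $\left( -1\right) ^{j}q^{j}$ combine into $\left( -q\right) ^{j}$.

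The only delicate point I expect is this boundary term at $j=0$ in the second identity. The relation $x\sum_{l}\binom{j}{l}w_{l}=\left( 1+x\right) w_{j}$ holds only for $j\geq 1$, so I must track it carefully through the generating-function comparison. Everything else is a routine coefficient extraction.
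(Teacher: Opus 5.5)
Your proposal is correct and follows the paper's route. Applying (\ref{cbis}) to the sign-alternated sequences turns $\overline{M}(j)$ into the binomial transform, and the result then rests on $x\sum_{l}\binom{j}{l}\mathcal{B}_{l}(x)=\mathcal{B}_{j+1}(x)$ and on $x\sum_{l}\binom{j}{l}w_{l}(x)=(x+1)w_{j}(x)$ for $j\geq 1$, with the $j=0$ term equal to $x$. The only difference is that the paper cites these two recurrences from the literature, whereas you derive them from the exponential generating functions, which makes your argument self-contained and handles the $j=0$ boundary term cleanly through $\delta_{j,0}$.
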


\begin{proof}
For $a_{k}=\left( -1\right) ^{k}x\mathcal{B}_{k}\left( x\right) $ in
Proposition \ref{T1} we obtain%
\begin{equation*}
\overline{M}\left( j\right) =x\overset{j}{\underset{l=0}{\sum }}\left(
-1\right) ^{l}\binom{j}{l}\left( -1\right) ^{l}\mathcal{B}_{l}\left(
x\right) =x\overset{j}{\underset{l=0}{\sum }}\binom{j}{l}\mathcal{B}%
_{l}\left( x\right)
\end{equation*}%
and since $x\overset{j}{\underset{l=0}{\sum }}\binom{j}{l}\mathcal{B}%
_{l}\left( x\right) =\mathcal{B}_{j+1}\left( x\right) $ \cite{Boy1,Dil}, we
get $\overline{M}\left( j\right) =\mathcal{B}_{j+1}\left( x\right) \ $and%
\begin{equation*}
\overline{S}_{n}(q)=x\overset{n}{\underset{k=0}{\sum }}\left( -1\right) ^{k}%
\mathcal{B}_{k}\left( x\right) \binom{n}{k}\left( 1-q\right) ^{n-k}q^{k}=%
\overset{n}{\underset{j=0}{\sum }}\left( -1\right) ^{j}\binom{n}{j}\overline{%
M}\left( j\right) q^{j}=\overset{n}{\underset{j=0}{\sum }}\binom{n}{j}%
\mathcal{B}_{j+1}\left( x\right) \left( -q\right) ^{j},
\end{equation*}%
and, consequently, the first identity follows.\newline
Similarly, for $a_{k}=\left( -1\right) ^{k}xw_{k}\left( x\right) $ in
Proposition \ref{T1} we obtain%
\begin{equation*}
\overline{M}\left( j\right) =x\overset{j}{\underset{l=0}{\sum }}\left(
-1\right) ^{l}\binom{j}{l}\left( -1\right) ^{l}w_{l}\left( x\right) =x%
\overset{j}{\underset{l=0}{\sum }}\binom{j}{l}w_{l}\left( x\right)
\end{equation*}%
and since for $n\geq 1$ there holds $\left( x+1\right) w_{n}\left( x\right)
=x\sum_{k=0}^{n}\binom{n}{k}w_{k}\left( x\right) $ \cite{Dil,Mih3}, we get%
\begin{equation*}
\overline{M}\left( 0\right) =x\text{ and }\overline{M}\left( j\right)
=\left( x+1\right) w_{j}\left( x\right) ,\text{ }j\geq 1,
\end{equation*}%
and%
\begin{eqnarray*}
\overline{S}_{n}(q) &=&x\overset{n}{\underset{k=0}{\sum }}\left( -1\right)
^{k}w_{k}\left( x\right) \binom{n}{k}\left( 1-q\right) ^{n-k}q^{k} \\
&=&M\left( 0\right) +\overset{n}{\underset{j=1}{\sum }}\left( -1\right) ^{j}%
\binom{n}{j}\overline{M}\left( j\right) q^{j} \\
&=&x+\left( x+1\right) \overset{n}{\underset{j=1}{\sum }}\binom{n}{j}%
w_{j}\left( x\right) \left( -q\right) ^{j} \\
&=&\left( x+1\right) \overset{n}{\underset{j=0}{\sum }}\binom{n}{j}%
w_{j}\left( x\right) \left( -q\right) ^{j}-1.
\end{eqnarray*}
\end{proof}

\begin{remark}
For $q\rightarrow 1$ in the above corollaries it follow that we have%
\begin{eqnarray*}
\overset{n}{\underset{j=0}{\sum }}\left( -1\right) ^{j}\binom{n}{j}\mathcal{F%
}_{n+r-2j}c^{j} &=&\mathcal{F}_{r}, \\
\overset{n}{\underset{j=0}{\sum }}\left( -1\right) ^{n-j}\binom{n}{j}\binom{%
\alpha +j}{r+n} &=&\binom{\alpha }{r}, \\
\overset{n}{\underset{j=0}{\sum }}\left( -1\right) ^{j}\binom{n}{j}\frac{n}{%
s\left( m-j\right) +n}\binom{sm+n-j}{m-j} &=&0,\  \ m\geq n. \\
\overset{n}{\underset{j=0}{\sum }}\left( -1\right) ^{j}\binom{n}{j}%
L_{n+r}^{\left( \alpha -j\right) }\left( x\right) &=&L_{r}^{\left( \alpha
\right) }\left( x\right) , \\
\overset{n}{\underset{j=0}{\sum }}\left( -1\right) ^{j}\binom{n}{j}%
L_{r-j}^{\left( \alpha +n\right) }\left( x\right) &=&L_{r}^{\left( \alpha
\right) }\left( x\right) , \\
\overset{n}{\underset{j=0}{\sum }}\left( -1\right) ^{j}\binom{n}{j}\frac{%
\left( \beta -1\right) ^{j}\left( x+1\right) _{j}}{\left( \alpha -j-1\right)
_{j}\beta ^{j}}M_{n+r-j}\left( x;\alpha +j,\beta \right) &=&M_{r}\left(
x;\alpha ,\beta \right) , \\
\overset{n}{\underset{j=0}{\sum }}\left( -1\right) ^{n-j}\binom{n}{j}%
\mathcal{B}_{j+1}\left( x\right) &=&x\mathcal{B}_{n}\left( x\right) , \\
\left( x+1\right) \overset{n}{\underset{j=0}{\sum }}\left( -1\right) ^{n-j}%
\binom{n}{j}w_{j}\left( x\right) &=&xw_{n}\left( x\right) +\left( -1\right)
^{n}.
\end{eqnarray*}
\end{remark}

\section{Some properties of the sequences $\left( S_{n}(q)\right) $}

In this section, we give some properties related to the sequences $\left(
S_{n}\left( x\right) ;n\geq 0\right) $ and $\left( S_{n}\left( x+q-xq\right)
;n\geq 0\right) .$

\begin{proposition}
\label{P1}Let $A(z)=\sum_{n\geq 0}a_{n}z^{n}$. There holds%
\begin{equation}
S_{n}(q)=\left[ z^{n}\right] \ (1-q+qz)^{n}A(z)=\left[ z^{n}\right] \  \frac{1%
}{1-qz}A\left( \frac{\left( 1-q\right) z}{1-qz}\right) .  \label{s}
\end{equation}
\end{proposition}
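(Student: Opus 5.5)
Both equalities in (\ref{s}) can be checked by direct coefficient extraction, treating all series as formal power series in $z$.

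\emph{First equality.} Expand $(1-q+qz)^{n}=\sum_{i=0}^{n}\binom{n}{i}q^{i}z^{i}(1-q)^{n-i}$ by the binomial theorem. Multiply by $A(z)=\sum_{k}a_{k}z^{k}$. The coefficient of $z^{n}$ comes from the pairs with $i+k=n$, so it equals
\begin{equation*}
\sum_{k=0}^{n}a_{k}\binom{n}{n-k}q^{n-k}(1-q)^{k}.
\end{equation*}
By (\ref{g}) this is $S_{n}(q)$. This step is a routine Cauchy product.

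\emph{Second equality.} Substitute the power series of $A$:
\begin{equation*}
\frac{1}{1-qz}A\left( \frac{(1-q)z}{1-qz}\right) =\sum_{k\geq 0}a_{k}(1-q)^{k}z^{k}(1-qz)^{-k-1}.
\end{equation*}
This is a legitimate formal power series because the $k$-th term is $O(z^{k})$, so each coefficient of $z^{n}$ involves only finitely many terms. Next use the negative binomial expansion
\begin{equation*}
(1-qz)^{-k-1}=\sum_{m\geq 0}\binom{m+k}{k}q^{m}z^{m}.
\end{equation*}
Extracting $[z^{n}]$ forces $m=n-k$ with $0\leq k\leq n$. The result is $\sum_{k=0}^{n}a_{k}(1-q)^{k}\binom{n}{k}q^{n-k}=S_{n}(q)$.

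There is no real obstacle. The only point needing care is justifying the composition $A\bigl((1-q)z/(1-qz)\bigr)$ formally, which holds because the inner series has zero constant term. One could also obtain the second form from the first by the Lagrange inversion (or ``$[z^{n}]\phi(z)^{n}A(z)$'') formula with $\phi(z)=1-q+qz$. However, the direct expansion above is shorter, and it is the approach I would write up.
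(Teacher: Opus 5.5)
Your proof is correct. The first equality is handled exactly as in the paper, by a Cauchy product. For the second equality the paper uses the route you only mention at the end. It notes that the identity is classical and gives a ``new'' proof by Lagrange inversion. It solves $z=t(1-q+qz)$ to get $z=(1-q)t/(1-qt)$, differentiates the Lagrange expansion of an auxiliary function $F$ with $(1-q+qu)F'(u)=A(u)$, and so identifies $\sum_{n\geq 0}t^{n}[u^{n}](1-q+qu)^{n}A(u)$ with $\frac{1}{1-qt}A\bigl(\frac{(1-q)t}{1-qt}\bigr)$.

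Your negative-binomial expansion, $[z^{n}]\,z^{k}(1-qz)^{-k-1}=\binom{n}{k}q^{n-k}$ for $0\leq k\leq n$, is shorter and purely formal. It needs neither analyticity of $A$ nor the auxiliary $F$. Constructing $F$ involves dividing by $1-q$, so the paper's argument tacitly excludes $q=1$, a case your argument covers without comment.

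What the Lagrange route buys is structure rather than brevity. It exhibits (\ref{s}) as the case $\phi(z)=1-q+qz$ of the general identity $\sum_{n\geq 0}t^{n}[z^{n}]\phi(z)^{n}A(z)=A(w)/(1-t\phi'(w))$ with $w=t\phi(w)$, which holds for general $\phi$. Your computation, by contrast, depends on $\phi$ being linear, so that the composed series can be expanded term by term explicitly.
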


\begin{proof}
From the expansion%
\begin{equation*}
(1-q+qz)^{n}=\sum_{k=0}^{n}\binom{n}{k}q^{\,n-k}(1-q)^{k}z^{k}
\end{equation*}%
the coefficient of $z^{n}$ in the product $(1-q+qz)^{n}A(z)$ is given by%
\begin{align*}
\lbrack z^{n}](1-q+qz)^{n}A(z)& =[z^{n}]\  \left( \sum_{i=0}^{n}\binom{n}{i}%
q^{\,i}(1-q)^{n-i}z^{i}\right) \left( \sum_{j\geq 0}a_{j}z^{j}\right) \\
& =[z^{n}]\  \sum_{k\geq 0}z^{k}\underset{i=0}{\overset{\min \left(
n,k\right) }{\sum }}a_{k-i}\binom{n}{i}q^{\,i}(1-q)^{n-i} \\
& =\sum_{k=0}^{n}a_{n-k}\binom{n}{k}q^{\,k}(1-q)^{n-k} \\
& =\sum_{k=0}^{n}a_{k}\binom{n}{k}q^{\,n-k}(1-q)^{k} \\
& =S_{n}(q),
\end{align*}%
and this completes the proof of the first identity. The second\ identity is
known \cite{Cic,Fla,Gou,Hau}, and we present here a new proof based on
Lagrange inversion theorem \cite{Com,Ges,Sur} and on the last identity $%
S_{n}(q)=\left[ z^{n}\right] \ (1-q+qz)^{n}A(z)$. Indeed, the equation $%
z=t\left( 1-q+qz\right) $ has solution $z=\frac{\left( 1-q\right) t}{1-qt}.$
For any analytic function about zero $F$ it is known that we have%
\begin{equation*}
F\left( z\right) =F\left( 0\right) +\underset{n\geq 1}{\sum }\frac{t^{n}}{n!}%
D_{u=0}^{n-1}\left( F^{\prime }\left( u\right) \left( 1-q+qu\right)
^{n}\right) ,
\end{equation*}%
which gives%
\begin{equation*}
z^{\prime }F^{\prime }\left( z\right) =\underset{n\geq 0}{\sum }\frac{t^{n}}{%
n!}D_{u=0}^{n}\left( F^{\prime }\left( u\right) \left( 1-q+qu\right)
^{n+1}\right)
\end{equation*}%
or equivalently%
\begin{equation*}
\frac{1-q}{\left( 1-qt\right) ^{2}}F^{\prime }\left( \frac{\left( 1-q\right)
t}{1-qt}\right) =\underset{n\geq 0}{\sum }\frac{t^{n}}{n!}D_{u=0}^{n}\left(
\left( 1-q+qu\right) F^{\prime }\left( u\right) \left( 1-q+qu\right)
^{n}\right) .
\end{equation*}%
Then when one choice $F$ such that $\left( 1-q+qt\right) F^{\prime }\left(
t\right) =A\left( t\right) ,$ he gives%
\begin{equation*}
F^{\prime }\left( \frac{\left( 1-q\right) t}{1-qt}\right) =\frac{1-qt}{1-q}%
A\left( \frac{\left( 1-q\right) t}{1-qt}\right) .
\end{equation*}%
Hence, by the identity%
\begin{equation*}
S_{n}(q)=\left[ z^{n}\right] \ (1-q+qz)^{n}A(z)=\frac{1}{n!}%
D_{u=0}^{n}\left( \left( 1-q+qu\right) ^{n}A\left( u\right) \right)
\end{equation*}%
it follows%
\begin{equation*}
\frac{1}{1-qt}A\left( \frac{\left( 1-q\right) t}{1-qt}\right) =\underset{%
n\geq 0}{\sum }\frac{t^{n}}{n!}D_{u=0}^{n}\left( A\left( u\right) \left(
1-q+qu\right) ^{n}\right) =\underset{n\geq 0}{\sum }S_{n}\left( q\right)
t^{n},
\end{equation*}%
i.e. $S_{n}\left( q\right) =\left[ t^{n}\right] \  \frac{1}{1-qt}A\left( 
\frac{\left( 1-q\right) t}{1-qt}\right) .$
\end{proof}

\begin{proposition}
\label{P2}If the sequence $\left( S_{n}\left( q\right) \right) $ is the
Bernoulli transform of the sequence $\left( a_{n}\right) ,$ then the
sequence $\left( S_{n}(x+q-xq)\right) $ is the Bernoulli transform of the
sequence $\left( S_{n}\left( x\right) \right) .$ \newline
In other words, there holds%
\begin{equation}
S_{n}(x+q-xq)=S_{n}\left( 1-\left( 1-x\right) \left( 1-q\right) \right) =%
\overset{n}{\underset{k=0}{\sum }}S_{k}\left( x\right) \binom{n}{k}\left(
1-q\right) ^{k}q^{n-k},  \label{d1}
\end{equation}%
or equivalently,%
\begin{equation}
S_{n}\left( 1-xq\right) =\overset{n}{\underset{k=0}{\sum }}S_{k}\left(
1-x\right) \binom{n}{k}\left( 1-q\right) ^{n-k}q^{k}.  \label{d1bis}
\end{equation}
\end{proposition}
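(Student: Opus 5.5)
The cleanest route is through the generating function of Proposition \ref{P1}, which converts the Bernoulli transform into a substitution. Write $\Phi_q(A)(z)=\frac{1}{1-qz}A\!\left(\frac{(1-q)z}{1-qz}\right)$. By (\ref{s}), $\sum_{n\ge0}S_n(q)z^n=\Phi_q(A)(z)$, so the generating function of the sequence $(S_n(x))$ is $B(z):=\Phi_x(A)(z)$. Applying (\ref{s}) a second time, now to the sequence $(S_n(x))$ with parameter $q$, the right-hand side of (\ref{d1}) has generating function $\Phi_q(B)(z)=\Phi_q(\Phi_x(A))(z)$. It therefore suffices to prove the composition law
\begin{equation*}
\Phi_q\circ\Phi_x=\Phi_{x+q-xq},
\end{equation*}
since the left-hand side of (\ref{d1}) has generating function $\Phi_{x+q-xq}(A)$.

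To verify the composition law, set $w=\frac{(1-q)z}{1-qz}$. The argument of $A$ in $\Phi_q(\Phi_x(A))$ is
\begin{equation*}
\frac{(1-x)w}{1-xw}=\frac{(1-x)(1-q)z}{1-qz-x(1-q)z}=\frac{(1-p)z}{1-pz},\qquad p=x+q-xq,
\end{equation*}
because $1-p=(1-x)(1-q)$. The prefactor is
\begin{equation*}
\frac{1}{1-qz}\cdot\frac{1}{1-xw}=\frac{1}{1-qz}\cdot\frac{1-qz}{1-qz-x(1-q)z}=\frac{1}{1-pz}.
\end{equation*}
These two computations together give exactly $\Phi_p(A)$. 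Comparing coefficients of $z^n$ then yields (\ref{d1}). All of this is an identity of formal power series: each substituted series has zero constant term, so the compositions are legitimate formally and no analyticity assumptions are needed.

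As a sanity check, or as an alternative proof avoiding generating functions, one can substitute the definition of $S_k(x)$ directly into the right-hand side of (\ref{d1}). After interchanging the sums, the coefficient of $a_j$ is
\begin{equation*}
\sum_k\binom{n}{k}\binom{k}{j}(1-q)^k q^{n-k}(1-x)^j x^{k-j}.
\end{equation*}
Using $\binom nk\binom kj=\binom nj\binom{n-j}{k-j}$ and the binomial theorem, this collapses to
\begin{equation*}
\binom nj\big((1-q)(1-x)\big)^j\big(q+x(1-q)\big)^{n-j},
\end{equation*}
which is precisely the Bernstein weight at parameter $p$.

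Finally, (\ref{d1bis}) follows from (\ref{d1}) by the substitutions $x\mapsto 1-x$ and $q\mapsto 1-q$. The parameter becomes $1-x+1-q-(1-x)(1-q)=1-xq$, and the weights become $q^k(1-q)^{n-k}$, as stated. The only step needing care is the algebraic simplification of the nested substitution, and it is routine.
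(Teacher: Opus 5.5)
Your proof is correct. Your ``sanity check'' is in fact exactly the paper's proof: substitute the definition of $S_k(x)$, interchange the sums, and apply $\binom{n}{k}\binom{k}{j}=\binom{n}{j}\binom{n-j}{k-j}$ together with the binomial theorem.

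Your main argument takes a genuinely different route. Instead of manipulating the double sum, you pass through the generating function (\ref{s}) of Proposition \ref{P1}. This reduces (\ref{d1}) to the composition law $\Phi_q\circ\Phi_x=\Phi_{x+q-xq}$ for the substitution operators. Your verification of that law, via $1-xw=(1-pz)/(1-qz)$, is correct and valid as an identity of formal power series.

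What this buys is structure. Writing $T_u:=\Phi_{1-u}$, the Bernoulli transforms form a multiplicative one-parameter family, $T_uT_v=T_{uv}$ with $T_1=\mathrm{id}$. The inverse $\Phi_q^{-1}=\Phi_{-q/(1-q)}$ is then immediate and yields (\ref{q2}) directly, without the limit $m\to\infty$ used in Corollary \ref{C1}.

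The price is a dependence on the second identity in (\ref{s}). The paper proves it by Lagrange inversion, although it also follows in one line from $\sum_{n\ge k}\binom{n}{k}(qz)^{n-k}=(1-qz)^{-k-1}$. The paper's direct computation, by contrast, is self-contained. It is also essentially the computation reused in the probabilistic corollary, showing that $W(n)=T(Z(n))$ is binomial with parameter $(1-x)(1-y)$, i.e.\ that binomial thinnings compose.

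Your derivation of (\ref{d1bis}) from (\ref{d1}) by $x\mapsto 1-x$, $q\mapsto 1-q$ is also correct. It fills in the step the paper leaves as ``equivalently''.
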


\begin{proof}
From the definition of $S_{n}\left( x\right) $ we get%
\begin{eqnarray*}
\overset{n}{\underset{k=0}{\sum }}S_{k}\left( x\right) \binom{n}{k}\left(
1-q\right) ^{k}q^{n-k} &=&\overset{n}{\underset{k=0}{\sum }}\left( \overset{k%
}{\underset{j=0}{\sum }}a_{j}\binom{k}{j}\left( 1-x\right)
^{j}x^{k-j}\right) \binom{n}{k}\left( 1-q\right) ^{k}q^{n-k} \\
&=&\overset{n}{\underset{j=0}{\sum }}a_{j}\overset{n}{\underset{k=j}{\sum }}%
\binom{n}{k}\binom{k}{j}\left( 1-x\right) ^{j}x^{k-j}\left( 1-q\right)
^{k}q^{n-k}
\end{eqnarray*}%
and since $\binom{n}{k}\binom{k}{j}=\binom{n}{j}\binom{n-j}{k-j}$ we can
write%
\begin{eqnarray*}
\overset{n}{\underset{k=0}{\sum }}S_{k}\left( x\right) \binom{n}{k}\left(
1-q\right) ^{k}q^{n-k} &=&\overset{n}{\underset{j=0}{\sum }}a_{j}\binom{n}{j}%
\overset{n}{\underset{k=j}{\sum }}\binom{n-j}{k-j}\left( 1-x\right)
^{j}x^{k-j}\left( 1-q\right) ^{k}q^{n-k} \\
&=&\overset{n}{\underset{j=0}{\sum }}a_{j}\binom{n}{j}\left( 1-x\right)
^{j}\left( 1-q\right) ^{j}\overset{n-j}{\underset{l=0}{\sum }}\binom{n-j}{l}%
\left( x\left( 1-q\right) \right) ^{l}q^{n-j-l} \\
&=&\overset{n}{\underset{j=0}{\sum }}a_{j}\binom{n}{j}\left( \left(
1-x\right) \left( 1-q\right) \right) ^{j}\left( x\left( 1-q\right) +q\right)
^{n-j} \\
&=&S_{n}(x+q-xq).
\end{eqnarray*}
\end{proof}

\noindent For the particular cases $x+q-xq=q^{m}$ or $x+q-xq=\left( \alpha
+1\right) q$ we get:

\begin{corollary}
\label{C1}For any positive integer $m$ we have%
\begin{eqnarray}
S_{n}(q^{m}) &=&\overset{n}{\underset{k=0}{\sum }}S_{k}\left( 1-\left[ q%
\right] _{m}\right) \binom{n}{k}\left( 1-q\right) ^{k}q^{n-k},  \label{e1} \\
S_{n}(q^{m}) &=&\overset{n}{\underset{k=0}{\sum }}S_{k}\left( q\right) 
\binom{n}{k}\left( \left[ q\right] _{m}\right) ^{k}\left( 1-\left[ q\right]
_{m}\right) ^{n-k}  \label{f1}
\end{eqnarray}%
which give when $q\in \left] 0,1\right[ $ and $m\longrightarrow \infty :$%
\begin{equation}
a_{n}=\overset{n}{\underset{k=0}{\sum }}S_{k}\left( -\frac{q}{1-q}\right) 
\binom{n}{k}\left( 1-q\right) ^{k}q^{n-k}=\frac{1}{\left( 1-q\right) ^{n}}%
\overset{n}{\underset{k=0}{\sum }}S_{k}\left( q\right) \binom{n}{k}\left(
-q\right) ^{n-k}.  \label{q2}
\end{equation}%
Also, for any real number $\alpha ,$ we have%
\begin{eqnarray}
S_{n}(\left( \alpha +1\right) q) &=&\overset{n}{\underset{k=0}{\sum }}%
S_{k}\left( \frac{\alpha q}{1-q}\right) \binom{n}{k}\left( 1-q\right)
^{k}q^{n-k},  \label{g1} \\
S_{n}(\left( \alpha +1\right) q) &=&\frac{1}{\left( 1-q\right) ^{n}}\overset{%
n}{\underset{k=0}{\sum }}S_{k}\left( q\right) \binom{n}{k}\left( 1-\left(
\alpha +1\right) q\right) ^{k}\left( \alpha q\right) ^{n-k}.  \label{h1}
\end{eqnarray}
\end{corollary}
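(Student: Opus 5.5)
The plan is to derive every identity of Corollary \ref{C1} from Proposition \ref{P2} by choosing $x$ suitably. I will use (\ref{d1}) in two forms. The first is (\ref{d1}) itself. The second comes from the observation that $x+q-xq=1-(1-x)(1-q)$ is symmetric in $x$ and $q$, so applying (\ref{d1}) with the roles of $x$ and $q$ exchanged gives
\begin{equation*}
S_{n}(x+q-xq)=\overset{n}{\underset{k=0}{\sum }}S_{k}\left( q\right) \binom{n}{k}\left( 1-x\right) ^{k}x^{n-k}.
\end{equation*}
Each claimed identity then follows by solving $x+q-xq=y$ for $x$, namely $x=(y-q)/(1-q)$ with $q\neq 1$, and substituting into one of these two forms.

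For (\ref{e1}) and (\ref{f1}) I take $y=q^{m}$. Then
\begin{equation*}
x=\frac{q^{m}-q}{1-q}=1-\frac{1-q^{m}}{1-q}=1-\left[ q\right] _{m}.
\end{equation*}
The first form gives (\ref{e1}) directly. The symmetric form with $1-x=\left[ q\right] _{m}$ gives (\ref{f1}).

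For (\ref{q2}) I fix $q\in \left] 0,1\right[ $ and let $m\rightarrow \infty $. Both sides are finite sums of polynomials in the parameters, so passing to the limit termwise is legitimate. On the left, $S_{n}(q^{m})\rightarrow S_{n}(0)=a_{n}$, since only the term $k=n$ survives at $q=0$. On the right, $\left[ q\right] _{m}\rightarrow 1/(1-q)$ and $1-\left[ q\right] _{m}\rightarrow -q/(1-q)$. Substituting these limits into (\ref{e1}) gives the first expression in (\ref{q2}). Substituting them into (\ref{f1}) gives
\begin{equation*}
\overset{n}{\underset{k=0}{\sum }}S_{k}(q)\binom{n}{k}(1-q)^{-k}\left( \frac{-q}{1-q}\right) ^{n-k},
\end{equation*}
and collecting the powers of $1-q$ into the common factor $(1-q)^{-n}$ yields the second expression.

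For (\ref{g1}) and (\ref{h1}) I take $y=(\alpha +1)q$, which gives $x=\alpha q/(1-q)$. The first form yields (\ref{g1}) immediately. In the symmetric form I will use
\begin{equation*}
1-x=\frac{1-(\alpha +1)q}{1-q}, \qquad x^{n-k}=\frac{(\alpha q)^{n-k}}{(1-q)^{n-k}},
\end{equation*}
so that the denominators again combine into $(1-q)^{-n}$, which is (\ref{h1}).

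None of these steps is a real obstacle; the work is a short computation once the two forms of Proposition \ref{P2} are in hand. The only points needing care are the standing assumption $q\neq 1$, the value $S_{n}(0)=a_{n}$ used in the limit, and keeping track of the powers of $1-q$.
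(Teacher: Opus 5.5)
Your proposal is correct and is essentially the paper's own argument. The paper states the corollary as the specialization of (\ref{d1}), together with its symmetric form $S_{n}(x+q-xq)=\sum_{k}S_{k}(q)\binom{n}{k}(1-x)^{k}x^{n-k}$ noted in the introduction, at $x=1-[q]_{m}$ and $x=\alpha q/(1-q)$, followed by letting $m\rightarrow\infty$; your computations of $x$, $1-x$, the limits and the powers of $1-q$ all check out, and (\ref{q2}) in fact holds for every $q\neq 1$ by taking $x=-q/(1-q)$ directly, since then $x+q-xq=0$.
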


\noindent More generally of the identity (\ref{d1}), the following
proposition gives the Bernoulli transform of the sequence $\left( \left(
1-q\right) ^{m}S_{n+m}\left( x\right) \right) ,$ i.e.\ it express the sum%
\begin{equation*}
\overset{n}{\underset{k=0}{\sum }}S_{k+m}\left( x\right) \binom{n}{k}\left(
1-q\right) ^{k}q^{n-k}
\end{equation*}%
by $S_{n}(x+q-xq),$ $S_{n+1}(x+q-xq),$ $\ldots ,$ $S_{n+m}(x+q-xq).$

\begin{proposition}
\label{P3}Let $m$ be a nonnegative integer. If the sequence $\left(
S_{n}\left( q\right) \right) $ is the Bernoulli transform of the sequence $%
\left( a_{n}\right) ,$ then the sequence 
\begin{equation*}
\left( \overset{m}{\underset{j=0}{\sum }}\binom{m}{j}\left( -q\right)
^{m-j}S_{j+n}\left( x+q-xq\right) ;n\geq 0\right)
\end{equation*}%
is the Bernoulli transform of the sequence $\left( \left( 1-q\right)
^{m}S_{n+m}\left( x\right) \right) .$ \newline
In other words, there holds%
\begin{equation}
\left( 1-q\right) ^{m}\overset{n}{\underset{k=0}{\sum }}S_{k+m}\left(
x\right) \binom{n}{k}\left( 1-q\right) ^{k}q^{n-k}=\overset{m}{\underset{j=0}%
{\sum }}\binom{m}{j}\left( -q\right) ^{m-j}S_{j+n}\left( x+q-xq\right) .
\label{q1}
\end{equation}%
In particular, for $m=0,$ $m=1$ and $m=2,$ we have%
\begin{eqnarray}
\overset{n}{\underset{k=0}{\sum }}S_{k}\left( x\right) \binom{n}{k}\left(
1-q\right) ^{k}q^{n-k} &=&S_{n}\left( x+q-xq\right) ,  \label{r1} \\
\left( 1-q\right) \overset{n}{\underset{k=0}{\sum }}S_{k+1}\left( x\right) 
\binom{n}{k}\left( 1-q\right) ^{k}q^{n-k} &=&S_{n+1}\left( x+q-xq\right)
-qS_{n}\left( x+q-xq\right) ,  \label{s1} \\
\left( 1-q\right) ^{2}\overset{n}{\underset{k=0}{\sum }}S_{k+2}\left(
x\right) \binom{n}{k}\left( 1-q\right) ^{k}q^{n-k} &=&S_{n+2}\left(
x+q-xq\right) -2qS_{n+1}\left( x+q-xq\right)  \label{t1} \\
&&+q^{2}S_{n}\left( x+q-xq\right) ,  \notag
\end{eqnarray}%
and for $n=0$ we obtain the inverse relation of (\ref{d1}):%
\begin{equation}
\left( 1-q\right) ^{m}S_{m}\left( x\right) =\overset{m}{\underset{j=0}{\sum }%
}\binom{m}{j}\left( -q\right) ^{m-j}S_{j}\left( x+q-xq\right) .  \label{i2}
\end{equation}
\end{proposition}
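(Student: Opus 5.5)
The plan is to prove (\ref{q1}) by induction on $m$. The base case $m=0$ is exactly Proposition \ref{P2}, i.e. (\ref{d1}). The induction step rests on a general Pascal-type recurrence that every Bernoulli transform satisfies with respect to shifting the input sequence. Throughout, write $y:=x+q-xq$. For fixed $m\geq 0$ put
\begin{equation*}
T_{n}^{(m)}:=\overset{n}{\underset{k=0}{\sum }}S_{k+m}\left( x\right) \binom{n}{k}\left( 1-q\right) ^{k}q^{n-k},\qquad R_{n}^{(m)}:=\overset{m}{\underset{j=0}{\sum }}\binom{m}{j}\left( -q\right) ^{m-j}S_{j+n}\left( y\right) ,
\end{equation*}
so that (\ref{q1}) reads $\left( 1-q\right) ^{m}T_{n}^{(m)}=R_{n}^{(m)}$ for all $n\geq 0$.

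First I would record a lemma. For any sequence $(c_{k})$ with Bernoulli transform $U_{n}=\sum_{k}c_{k}\binom{n}{k}(1-q)^{k}q^{n-k}$, and with $U_{n}^{\prime }$ the Bernoulli transform of the shifted sequence $(c_{k+1})$, one has
\begin{equation*}
U_{n+1}=qU_{n}+\left( 1-q\right) U_{n}^{\prime }.
\end{equation*}
To prove it, split $\binom{n+1}{k}=\binom{n}{k}+\binom{n}{k-1}$ in $U_{n+1}$. The first part gives $qU_{n}$. In the second part, shifting $k\to k+1$ pulls out one factor $(1-q)$ and leaves $U_{n}^{\prime }$.

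Applying the lemma with $c_{k}=S_{k+m}(x)$ gives
\begin{equation*}
\left( 1-q\right) T_{n}^{(m+1)}=T_{n+1}^{(m)}-qT_{n}^{(m)}.
\end{equation*}

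For the induction step, assume $\left( 1-q\right) ^{m}T_{n}^{(m)}=R_{n}^{(m)}$ for all $n$. Multiplying the recurrence by $(1-q)^{m}$ gives
\begin{equation*}
\left( 1-q\right) ^{m+1}T_{n}^{(m+1)}=R_{n+1}^{(m)}-qR_{n}^{(m)}.
\end{equation*}
It remains to check that $R_{n+1}^{(m)}-qR_{n}^{(m)}=R_{n}^{(m+1)}$. This is Pascal's rule again: reindex $j\to j-1$ in $R_{n+1}^{(m)}$, then use $\binom{m}{j-1}+\binom{m}{j}=\binom{m+1}{j}$ to collect the coefficient of $S_{j+n}(y)$. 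This completes the induction.

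The special cases (\ref{r1}), (\ref{s1}) and (\ref{t1}) follow by taking $m=0,1,2$. Setting $n=0$, the left side of (\ref{q1}) reduces to $(1-q)^{m}S_{m}(x)$, which gives (\ref{i2}).

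I expect no real obstacle. The only delicate point is the bookkeeping in the lemma: the index shift must produce exactly one factor $(1-q)$, and the boundary terms $k=0$ and $k=n+1$ must vanish correctly. Once that recurrence is in place, the rest is formal.
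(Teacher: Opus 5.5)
Your proposal is correct and follows essentially the same route as the paper. Both argue by induction on $m$ with base case (\ref{d1}). Both split the $m$-level sum at $n+1$ via $\binom{n+1}{k}=\binom{n}{k}+\binom{n}{k-1}$, apply the induction hypothesis at both $n+1$ and $n$, and close with $\binom{m}{j-1}+\binom{m}{j}=\binom{m+1}{j}$. Stating the first split as a standalone lemma $U_{n+1}=qU_n+(1-q)U_n'$ is only a cleaner packaging of the paper's computation.
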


\begin{proof}
We proceed by induction on $m.$ The identity is true for $m=0$ and for all $%
n\geq 0$ because in this case is exactly the identity (\ref{d1}). Assume
that is true for $m$ for all $n\geq 0.$ We can write%
\begin{eqnarray*}
&&\overset{m}{\underset{j=0}{\sum }}\binom{m}{j}\left( -q\right)
^{m-j}S_{j+n+1}\left( x+q-xq\right) \\
&=&\left( 1-q\right) ^{m}\overset{n+1}{\underset{k=0}{\sum }}S_{k+m}\left(
x\right) \binom{n+1}{k}\left( 1-q\right) ^{k}q^{n+1-k} \\
&=&\left( 1-q\right) ^{m}\overset{n+1}{\underset{k=1}{\sum }}S_{k+m}\left(
x\right) \binom{n}{k-1}\left( 1-q\right) ^{k}q^{n+1-k}+q\left( 1-q\right)
^{m}\overset{n}{\underset{k=0}{\sum }}S_{k+m}\left( x\right) \binom{n}{k}%
\left( 1-q\right) ^{k}q^{n-k} \\
&=&\left( 1-q\right) ^{m+1}\overset{n}{\underset{k=0}{\sum }}S_{k+m+1}\left(
x\right) \binom{n}{k}\left( 1-q\right) ^{k}q^{n-k}+q\left( 1-q\right) ^{m}%
\overset{n}{\underset{k=0}{\sum }}S_{k+m}\left( x\right) \binom{n}{k}\left(
1-q\right) ^{k}q^{n-k} \\
&=&\left( 1-q\right) ^{m+1}\overset{n}{\underset{k=0}{\sum }}S_{k+m+1}\left(
x\right) \binom{n}{k}\left( 1-q\right) ^{k}q^{n-k}+q\overset{m}{\underset{j=0%
}{\sum }}\binom{m}{j}\left( -q\right) ^{m-j}S_{j+n}\left( x+q-xq\right) ,
\end{eqnarray*}%
hence, we get%
\begin{eqnarray*}
&&\left( 1-q\right) ^{m+1}\overset{n}{\underset{k=0}{\sum }}S_{k+m+1}\left(
x\right) \binom{n}{k}\left( 1-q\right) ^{k}q^{n-k} \\
&=&\overset{m}{\underset{j=0}{\sum }}\binom{m}{j}\left( -q\right)
^{m-j}S_{j+n+1}\left( x+q-xq\right) -q\overset{m}{\underset{j=0}{\sum }}%
\binom{m}{j}\left( -q\right) ^{m-j}S_{j+n}\left( x+q-xq\right) \\
&=&\overset{m+1}{\underset{j=1}{\sum }}\binom{m}{j-1}\left( -q\right)
^{m+1-j}S_{j+n}\left( x+q-xq\right) +\overset{m}{\underset{j=0}{\sum }}%
\binom{m}{j}\left( -q\right) ^{m+1-j}S_{j+n}\left( x+q-xq\right) \\
&=&\overset{m+1}{\underset{j=0}{\sum }}\left( \binom{m}{j-1}+\binom{m}{j}%
\right) \left( -q\right) ^{m+1-j}S_{j+n}\left( x+q-xq\right) \\
&=&\overset{m+1}{\underset{j=0}{\sum }}\binom{m+1}{j}\left( -q\right)
^{m+1-j}S_{j+n}\left( x+q-xq\right) ,
\end{eqnarray*}%
and this completes the induction step.
\end{proof}

\noindent Therefore, for $x=0$ in Proposition \ref{P3} we get $S_{n}\left(
0\right) =a_{n}$ and we obtain the following corollary.

\begin{corollary}
\label{C2}Let $m$ be a nonnegative integer. If the sequence $\left(
S_{n}\left( q\right) \right) $ is the Bernoulli transform of the sequence $%
\left( a_{n}\right) ,$ then the sequence 
\begin{equation*}
\left( \overset{m}{\underset{j=0}{\sum }}\binom{m}{j}\left( -q\right)
^{m-j}S_{j+n}\left( q\right) ;n\geq 0\right)
\end{equation*}%
is the Bernoulli transform of the sequence $\left( \left( 1-q\right)
^{m}a_{n+m};n\geq 0\right) .$ \newline
In other words, there holds%
\begin{equation}
\left( 1-q\right) ^{m}\overset{n}{\underset{k=0}{\sum }}a_{k+m}\binom{n}{k}%
\left( 1-q\right) ^{k}q^{n-k}=\overset{m}{\underset{j=0}{\sum }}\binom{m}{j}%
\left( -q\right) ^{m-j}S_{j+n}\left( q\right) .  \label{u1}
\end{equation}%
In particular, for $m=0,$ $m=1$ and $m=2,$ we have%
\begin{eqnarray}
\overset{n}{\underset{k=0}{\sum }}a_{k}\binom{n}{k}\left( 1-q\right)
^{k}q^{n-k} &=&S_{n}\left( q\right) ,  \label{v1} \\
\left( 1-q\right) \overset{n}{\underset{k=0}{\sum }}a_{k+1}\binom{n}{k}%
\left( 1-q\right) ^{k}q^{n-k} &=&S_{n+1}\left( q\right) -qS_{n}\left(
q\right) ,  \label{w1} \\
\left( 1-q\right) ^{2}\overset{n}{\underset{k=0}{\sum }}a_{k+2}\binom{n}{k}%
\left( 1-q\right) ^{k}q^{n-k} &=&S_{n+2}\left( q\right) -2qS_{n+1}\left(
q\right) +q^{2}S_{n}\left( q\right) ,  \label{x1}
\end{eqnarray}%
and for $n=0$ we find the identity (\ref{q2}) which is the inverse relation
of (\ref{v1}):%
\begin{equation}
\left( 1-q\right) ^{m}a_{m}=\overset{m}{\underset{j=0}{\sum }}\binom{m}{j}%
\left( -q\right) ^{m-j}S_{j}\left( q\right) .  \label{j2}
\end{equation}
\end{corollary}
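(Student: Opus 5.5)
Corollary \ref{C2} is the specialization $x=0$ of Proposition \ref{P3}, so the plan is to substitute $x=0$ into (\ref{q1}) and identify both sides. First, from the definition (\ref{g}), at $x=0$ every term $\binom{k}{j}(1-0)^{j}0^{k-j}$ vanishes except $j=k$ (with the convention $0^{0}=1$). Hence $S_{k}(0)=a_{k}$ for all $k\geq 0$, and in particular $S_{k+m}(0)=a_{k+m}$. Second, the argument $x+q-xq$ reduces to $q$ when $x=0$, so $S_{j+n}(x+q-xq)$ becomes $S_{j+n}(q)$. Substituting both facts into (\ref{q1}) gives (\ref{u1}) directly.

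The Bernoulli-transform phrasing of the corollary is just a restatement of (\ref{u1}): the left side is the Bernoulli transform of $\left((1-q)^{m}a_{n+m}\right)_{n}$, by linearity in the sequence.

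The special cases are routine. Taking $m=0,1,2$ in (\ref{u1}) and expanding $\binom{m}{j}(-q)^{m-j}$ gives (\ref{v1}), (\ref{w1}) and (\ref{x1}); here (\ref{v1}) is simply the definition (\ref{g}). Taking $n=0$ in (\ref{u1}) leaves the single term $k=0$ on the left, which yields (\ref{j2}).

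The remaining point is the claim that (\ref{j2}) agrees with (\ref{q2}). To check it, divide (\ref{j2}) by $(1-q)^{m}$ and rename $m$ as $n$ and $j$ as $k$; this gives exactly the second expression in (\ref{q2}). This step involves no real obstacle; the only delicate point is the convention $0^{0}=1$ used in evaluating $S_{k}(0)$.
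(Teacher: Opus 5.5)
Your proposal is correct and follows the paper's argument exactly: the corollary is obtained by setting $x=0$ in Proposition~\ref{P3}, using $S_{n}(0)=a_{n}$ and $x+q-xq=q$. Your checks of the cases $m=0,1,2$, of $n=0$, and that (\ref{j2}) agrees with (\ref{q2}) are all sound; the last one needs $q\neq 1$ to divide by $(1-q)^{m}$, which holds in the setting $q\in\left]0,1\right[$ where (\ref{q2}) was stated.
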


\begin{remark}
- A similar transform of (\ref{w1}) is given in Theorem 1 of the paper \cite%
{Spi} for the Bernoulli transform when $q=1/2.$ The identity (\ref{j2})
represents also a recurrence relation for the sequence $\left( S_{m}\left(
q\right) \right) .$
\end{remark}

\noindent To give a probabilistic interpretation of Proposition \ref{P3},
let $\left( X_{n};n\geq 1\right) $ and $\left( Y_{n};n\geq 1\right) $ be two
independent sequences of independent random variables having the same law of
Bernoulli with parameters $1-q$ and $1-x$ respectively, i.e.%
\begin{eqnarray*}
P\left( X_{n}=1\right) &=&1-x\text{ \ and \ }P\left( X_{n}=0\right) =x,\  \
n\geq 1,\text{ and,} \\
P\left( Y_{n}=1\right) &=&1-y\text{ \ and \ }P\left( Y_{n}=0\right) =y,\  \
n\geq 1.
\end{eqnarray*}%
Setting $Z\left( n\right) =X_{1}+\cdots +X_{n},$ $T\left( n\right)
=Y_{1}+\cdots +Y_{n},$ $n\geq 1$ and $Z\left( 0\right) =T\left( 0\right) =0.$
It is known that $Z\left( n\right) $ and $T\left( n\right) $ have binomial
distributions with parameters $\left( n,1-x\right) $ and $\left(
n,1-y\right) $ respectively, i.e.%
\begin{equation*}
P\left( Z\left( n\right) =k\right) =\binom{n}{k}\left( 1-x\right)
^{k}x^{n-k},\text{ \ }P\left( T\left( n\right) =k\right) =\binom{n}{k}\left(
1-y\right) ^{k}y^{n-k},\text{ \ }k\in \left \{ 0,1,\ldots ,n\right \} .
\end{equation*}

\begin{corollary}[Probabilistic interpretation]
Let%
\begin{equation}
W\left( n\right) =T\left( Z\left( n\right) \right) =T\circ Z\left( n\right)
=Y_{1}+\cdots +Y_{X_{1}+\cdots +X_{n}}.  \label{k2}
\end{equation}%
Then, $W\left( n\right) $ has a binomial distribution with parameters $%
\left( n,\left( 1-x\right) \left( 1-y\right) \right) .$ \newline
By setting $a_{k}=f\left( k\right) ,$ the identity (\ref{q1}) can be written
as%
\begin{equation}
\left( 1-x\right) ^{m}\limfunc{E}f\left( T\left( m+Z\left( n\right) \right)
\right) =\overset{m}{\underset{j=0}{\sum }}\binom{m}{j}\left( -x\right)
^{m-j}\limfunc{E}f\left( T\circ Z\left( j+n\right) \right) ,  \label{m2}
\end{equation}%
where $\limfunc{E}Z$ means the expectation of the random variable $Z.$ 
\newline
In particular, for $y=0$ (respectively $n=y=0$), we get $T\left( n\right) =n$
and the identity (\ref{m2}) implies%
\begin{eqnarray}
\left( 1-x\right) ^{m}\limfunc{E}f\left( m+Z\left( n\right) \right) &=&%
\overset{m}{\underset{j=0}{\sum }}\binom{m}{j}\left( -x\right) ^{m-j}%
\limfunc{E}f\left( Z\left( j+n\right) \right) ,  \label{o2} \\
\left( 1-x\right) ^{m}f\left( m\right) &=&\overset{m}{\underset{j=0}{\sum }}%
\binom{m}{j}\left( -x\right) ^{m-j}\limfunc{E}f\left( Z\left( j\right)
\right) .  \label{p2}
\end{eqnarray}
\end{corollary}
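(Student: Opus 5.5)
The plan is to read both sides of (\ref{q1}) as expectations, after renaming the parameters. In (\ref{q1}) I replace $q$ by $x$ and $x$ by $y$, so that $x$ is the parameter attached to the $X_i$ and $y$ the one attached to the $Y_i$. Throughout, $T$ is built from the $Y_i$ and is independent of $Z$, and $a_k=f(k)$. The key observation is that, by the definition of the Bernoulli transform,
\begin{equation*}
S_k(y)=\sum_{j=0}^{k}f(j)\binom{k}{j}(1-y)^j y^{k-j}=\operatorname{E} f\left(T(k)\right),
\end{equation*}
because $T(k)$ is binomial with parameters $(k,1-y)$.

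First step: the law of $W(n)$. Conditioning on $Z(n)=k$ and using independence, $W(n)=T(k)$ has the law $\mathrm{Bin}(k,1-y)$. Hence
\begin{equation*}
\operatorname{E} s^{W(n)}=\operatorname{E}\left(y+(1-y)s\right)^{Z(n)}=\left(x+(1-x)\left(y+(1-y)s\right)\right)^n=\left(1-(1-x)(1-y)+(1-x)(1-y)s\right)^n .
\end{equation*}
This is the generating function of $\mathrm{Bin}(n,(1-x)(1-y))$, which proves the first claim. The same result follows from (\ref{d1}) applied to indicator sequences $a_j=\delta_{j,i}$, which is how I would phrase it to stay within the paper's tools.

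Second step: the identity (\ref{m2}). The right side of (\ref{q1}) involves $S_{j+n}(x+y-xy)$. Since $1-(x+y-xy)=(1-x)(1-y)$, the first step gives $S_{j+n}(x+y-xy)=\operatorname{E} f(W(j+n))=\operatorname{E} f(T\circ Z(j+n))$. For the left side, I condition on $Z(n)=k$, which has probability $\binom{n}{k}(1-x)^k x^{n-k}$. Using independence, $\operatorname{E}[f(T(m+Z(n)))\mid Z(n)=k]=S_{k+m}(y)$. Therefore the left side of (\ref{q1}) equals $(1-x)^m\operatorname{E} f(T(m+Z(n)))$, and (\ref{m2}) follows.

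Third step: the special cases. For $y=0$ every $Y_i$ equals $1$ almost surely, so $T(k)=k$, and (\ref{m2}) becomes (\ref{o2}). Taking also $n=0$ gives $Z(0)=0$, and (\ref{o2}) reduces to (\ref{p2}).

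The only delicate point is the conditioning argument: one needs the independence of the $Y$-sequence from $Z$ to justify $\operatorname{E}[f(T(m+Z(n)))\mid Z(n)=k]=\operatorname{E} f(T(m+k))$. One also has to keep track of the paper's swapped parameter letters, since the proposition is stated with $(q,x)$ and the corollary with $(x,y)$. Everything else is a direct substitution into (\ref{q1}) and (\ref{d1}).
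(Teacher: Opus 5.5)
Your proof is correct and follows the paper's argument. You first find the law of $W(n)$ by conditioning on $Z(n)$ and using independence. You then read both sides of (\ref{q1}), with $q\to x$ and $x\to y$, as expectations: $S_k(y)$ is the expectation of $f(T(k))$, and $S_{j+n}(x+y-xy)$ is the expectation of $f(W(j+n))$. The only difference is cosmetic. You get the law of $W(n)$ from its probability generating function, or from (\ref{d1}) with indicator sequences, whereas the paper sums the probability mass function directly using $\binom{n}{j}\binom{j}{k}=\binom{n}{k}\binom{n-k}{j-k}$.
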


\begin{proof}
For $k\in \left \{ 0,1,\ldots ,n\right \} $ we have%
\begin{eqnarray*}
P\left( W\left( n\right) =k\right) &=&P\left( T\circ Z\left( n\right)
=k\right) \\
&=&\overset{n}{\underset{j=0}{\sum }}P\left( T\left( j\right) =k/Z\left(
n\right) =j\right) P\left( Z\left( n\right) =j\right) \\
&=&\overset{n}{\underset{j=0}{\sum }}P\left( T\left( j\right) =k\right)
P\left( Z\left( n\right) =j\right) \\
&=&\overset{n}{\underset{j=0}{\sum }}\binom{j}{k}\left( 1-y\right)
^{k}y^{j-k}\binom{n}{j}\left( 1-x\right) ^{j}x^{n-j} \\
&=&\binom{n}{k}\overset{n}{\underset{j=k}{\sum }}\binom{n-k}{j-k}\left(
1-y\right) ^{k}y^{j-k}\left( 1-x\right) ^{j}x^{n-j} \\
&=&\binom{n}{k}\overset{n-k}{\underset{j=0}{\sum }}\binom{n-k}{j}\left(
1-y\right) ^{k}y^{j}\left( 1-x\right) ^{j+k}x^{n-k-j} \\
&=&\binom{n}{k}\left[ \left( 1-x\right) \left( 1-y\right) \right] ^{k}\left[
1-\left( 1-x\right) \left( 1-y\right) \right] ^{n-k}.
\end{eqnarray*}%
Since $S_{n}\left( x\right) =\limfunc{E}f\left( Z\left( n\right) \right) $
and $S_{n}\left( y\right) =\limfunc{E}f\left( T\left( n\right) \right) ,$
the identity (\ref{q1}) means that for any function $f$ defined on
nonnegative integers with $a_{k}=f\left( k\right) $ we have%
\begin{equation*}
\left( 1-x\right) ^{m}\overset{n}{\underset{k=0}{\sum }}\limfunc{E}f\left(
T\left( k+m\right) \right) \binom{n}{k}\left( 1-x\right) ^{k}x^{n-k}=\overset%
{m}{\underset{j=0}{\sum }}\binom{m}{j}\left( -x\right) ^{m-j}\limfunc{E}%
f\left( T\circ Z\left( j+n\right) \right) ,
\end{equation*}%
which is equivalent to the identity (\ref{m2}).
\end{proof}

\begin{remark}
Let $\left( x_{n};\ n\geq 0\right) $ be sequence of real numbers with $%
x_{n}\in \left] 0,1\right[ ,$ $n\geq 0,$ and let%
\begin{equation*}
S_{n}(x_{0})=\overset{n}{\underset{k=0}{\sum }}a_{k}\binom{n}{k}\left(
1-x_{0}\right) ^{k}x_{0}^{n-k}.
\end{equation*}%
Then by setting $q:=x_{1}\cdots x_{r},$ the identity (\ref{d1bis}) gives%
\begin{equation*}
S_{n}\left( 1-x_{0}x_{1}\cdots x_{r}\right) =S_{n}\left( 1-x_{0}q\right) =%
\overset{n}{\underset{k=0}{\sum }}S_{k}\left( q\right) \binom{n}{k}\left(
1-x_{0}\right) ^{n-k}x_{0}^{k},
\end{equation*}%
and the identity (\ref{d1}) can be written as:%
\begin{equation}
S_{n}\left( 1-x_{0}x_{1}\cdots x_{r}\right) =\overset{n}{\underset{k=0}{\sum 
}}S_{k}\left( 1-x_{1}\cdots x_{r}\right) \binom{n}{k}\left( 1-x_{0}\right)
^{n-k}x_{0}^{k}.  \label{u2}
\end{equation}%
Similarly, the identity (\ref{q1}) is equivalent to%
\begin{equation*}
x_{0}^{m}\overset{n}{\underset{k=0}{\sum }}S_{k+m}\left( 1-x_{1}\right) 
\binom{n}{k}\left( 1-x_{0}\right) ^{n-k}x_{0}^{k}=\overset{m}{\underset{j=0}{%
\sum }}\left( -1\right) ^{m-j}\binom{m}{j}\left( 1-x_{0}\right)
^{m-j}S_{j+n}\left( 1-x_{0}x_{1}\right) ,
\end{equation*}%
and by replacing $x_{1}$ by $x_{1}\cdots x_{r}$, it can be written as:%
\begin{equation}
x_{0}^{m}\overset{n}{\underset{k=0}{\sum }}S_{k+m}\left( 1-x_{1}\cdots
x_{r}\right) \binom{n}{k}\left( 1-x_{0}\right) ^{n-k}x_{0}^{k}=\overset{m}{%
\underset{j=0}{\sum }}\left( -1\right) ^{m-j}\binom{m}{j}\left(
1-x_{0}\right) ^{m-j}S_{j+n}\left( 1-x_{0}x_{1}\cdots x_{r}\right) .
\label{t2}
\end{equation}
\end{remark}

\begin{remark}
By replacing $q$ by $1/q$ in the identities (\ref{d1}) and(\ref{d1bis}) we
obtain the following alternating transforms%
\begin{equation}
\overset{n}{\underset{k=0}{\sum }}\left( -1\right) ^{k}a_{k}\binom{n}{k}%
\left( 1-q\right) ^{k}=q^{n}S_{n}\left( \frac{1}{q}\right) \text{ \ and \ }%
\overset{n}{\underset{k=0}{\sum }}\left( -1\right) ^{k}S_{k}\left(
1-x\right) \binom{n}{k}\left( 1-q\right) ^{n-k}=q^{n}S_{n}\left( 1-\frac{x}{q%
}\right) .  \label{a3}
\end{equation}
\end{remark}

\section{Applications to Appell polynomials}

Let $\left( c_{n};n\geq 0\right) $ be a sequence of real numbers such that $%
c_{0}=1$ and let $\left( f_{n}\left( y\right) ;\ n\geq 0\right) \ $ be a
sequence of Appell polynomials defined by \cite{App}:%
\begin{equation}
\sum_{n\geq 0}f_{n}\left( y\right) \frac{t^{n}}{n!}=F\left( t\right) e^{yt},
\label{e2}
\end{equation}%
where $F\left( t\right) =1+\sum_{n\geq 1}c_{n}\frac{t^{n}}{n!}.$ This
sequence can be also defined by%
\begin{equation*}
f_{0}\left( y\right) =1\text{ \ and \ }\frac{d}{dy}f_{n}\left( y\right)
=nf_{n-1}\left( y\right) .
\end{equation*}%
This shows that for $a_{n}=\frac{f_{n}\left( y\right) }{n!}$ in Proposition %
\ref{T1}, we get $\nabla a_{n}=\left( 1-D\right) \frac{f_{n}\left( y\right) 
}{n!},$ where $D=\frac{d}{dy}.$ \newline
Consequently, $M\left( n,j\right) =\nabla ^{j}a_{n}=\frac{1}{n!}\left(
1-D\right) ^{j}f_{n}\left( y\right) $ and 
\begin{equation}
\overset{n}{\underset{k=0}{\sum }}\binom{n}{k}\frac{n!}{k!}f_{k}\left(
y\right) \left( 1-q\right) ^{k}q^{n-k}=\left( 1-q+qD\right) ^{n}f_{n}\left(
y\right) .  \label{f2}
\end{equation}%
As special cases of Appell polynomials we cite Bernoulli and Euler
polynomials of high order defined by%
\begin{equation}
\sum_{n\geq 0}B_{n}^{\left( \alpha \right) }\left( y\right) \frac{t^{n}}{n!}%
=\left( \frac{t}{e^{t}-1}\right) ^{\alpha }e^{yt}\text{ \ and \ }\sum_{n\geq
0}E_{n}^{\left( \alpha \right) }\left( y\right) \frac{t^{n}}{n!}=\left( 
\frac{2}{e^{t}+1}\right) ^{\alpha }e^{yt}.  \label{g2}
\end{equation}

\begin{corollary}
Let $\left( f_{n}\left( x\right) \right) $ be a sequence of Appell
polynomials defined as above and let $\alpha ,\lambda \left( \neq 0\right) $
be real numbers. There holds%
\begin{equation}
\overset{n}{\underset{k=0}{\sum }}f_{n-k}\left( x\right) \binom{n}{k}\left(
1-q\right) ^{k}\left( \lambda q\right) ^{n-k}=\left( \lambda q\right)
^{n}f_{n}\left( x-\frac{1}{\lambda }+\frac{1}{\lambda q}\right) .  \label{k1}
\end{equation}%
In particular, we have%
\begin{eqnarray}
\overset{n}{\underset{k=0}{\sum }}B_{n-k}^{\left( \alpha \right) }\left(
x\right) \binom{n}{k}\left( 1-q\right) ^{k}\left( \lambda q\right) ^{n-k}
&=&\left( \lambda q\right) ^{n}B_{n}^{\left( \alpha \right) }\left( x-\frac{1%
}{\lambda }+\frac{1}{\lambda q}\right) ,  \label{m1} \\
\overset{n}{\underset{k=0}{\sum }}E_{n-k}^{\left( \alpha \right) }\left(
x\right) \binom{n}{k}\left( 1-q\right) ^{k}\left( \lambda q\right) ^{n-k}
&=&\left( \lambda q\right) ^{n}E_{n}^{\left( \alpha \right) }\left( x-\frac{1%
}{\lambda }+\frac{1}{\lambda q}\right) .  \label{n1}
\end{eqnarray}
\end{corollary}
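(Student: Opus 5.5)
Pass to exponential generating functions, where the Appell structure turns the left-hand side into a single translate of $f_n$.

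First reduce to the sum $\sum_k \binom{n}{k} f_{n-k}(x) u^{k}$ with $u=(1-q)/(\lambda q)$. Factoring $(\lambda q)^n$ from the left side of (\ref{k1}) gives
\begin{equation*}
\overset{n}{\underset{k=0}{\sum }}f_{n-k}\left( x\right) \binom{n}{k}\left(1-q\right) ^{k}\left( \lambda q\right) ^{n-k}=\left( \lambda q\right)^{n}\overset{n}{\underset{k=0}{\sum }}\binom{n}{k}f_{n-k}\left( x\right)u^{k}.
\end{equation*}
So it suffices to prove the binomial (translation) property of Appell polynomials,
\begin{equation*}
\overset{n}{\underset{k=0}{\sum }}\binom{n}{k}f_{n-k}\left( x\right) u^{k}=f_{n}\left( x+u\right).
\end{equation*}
Here $x+u=x+\frac{1}{\lambda q}-\frac{1}{\lambda }$, which is exactly the argument on the right of (\ref{k1}).

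To prove the translation property, use the generating function (\ref{e2}). We have
\begin{equation*}
\sum_{n\geq 0}f_{n}\left( x+u\right) \frac{t^{n}}{n!}=F\left( t\right)e^{xt}e^{ut}=\Bigl( \sum_{m\geq 0}f_{m}\left( x\right) \frac{t^{m}}{m!}\Bigr) \Bigl( \sum_{k\geq 0}u^{k}\frac{t^{k}}{k!}\Bigr).
\end{equation*}
Extracting the coefficient of $t^{n}/n!$ from the Cauchy product gives the identity. Alternatively, one can use Taylor's formula together with $D^{k}f_{n}=\frac{n!}{(n-k)!}f_{n-k}$, which follows from $\frac{d}{dy}f_{n}=nf_{n-1}$. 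This route is closer in spirit to (\ref{f2}): the operator $e^{uD}$ acting on $f_n$ reproduces the sum.

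Finally, (\ref{m1}) and (\ref{n1}) follow by specializing to $F(t)=\left(\frac{t}{e^t-1}\right)^{\alpha}$ and $F(t)=\left(\frac{2}{e^t+1}\right)^{\alpha}$. Both have constant term $1$, so $B_n^{(\alpha)}$ and $E_n^{(\alpha)}$ are Appell sequences by (\ref{g2}).

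There is no real obstacle here. The only care needed is the algebra confirming that $x+u$ matches $x-\frac1\lambda+\frac1{\lambda q}$, together with the assumptions $q\neq 0$ and $\lambda\neq 0$, which are needed so that $u$ is defined. If a purely polynomial statement is wanted, both sides are polynomials in $q$ after multiplying out, so the identity extends to all $q$ by continuity.
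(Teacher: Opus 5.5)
Your proof is correct, but it takes a more direct route than the paper. The paper does not factor out $(\lambda q)^n$. It recognizes the left side of (\ref{k1}) as $\overline{S}_n(q)$ for $a_n=\lambda^n f_n(x)$ and applies Proposition~\ref{T1}.

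\begin{itemize}
\item It first computes $\overline{M}(j)=\sum_{l}\binom{j}{l}(-\lambda)^l f_l(x)=(-\lambda)^j f_j\bigl(x-\frac{1}{\lambda}\bigr)$, using the exponential generating function $e^{t}F(-\lambda t)e^{-\lambda x t}$.
\item This gives the expansion in powers of $q$: $\overline{S}_n(q)=\sum_{j}\binom{n}{j}f_j\bigl(x-\frac{1}{\lambda}\bigr)(\lambda q)^j$.
\item A second generating-function step, with $e^{t}F(\lambda q t)e^{(x-1/\lambda)\lambda q t}$, resums this to $(\lambda q)^n f_n\bigl(x-\frac{1}{\lambda}+\frac{1}{\lambda q}\bigr)$.
\end{itemize}

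In effect, the paper performs the Appell translation in two steps, first by $-\frac{1}{\lambda}$ and then by $\frac{1}{\lambda q}$. You do it in one step, by $u=\frac{1-q}{\lambda q}$.

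Your argument is shorter and self-contained. It makes clear that nothing beyond the translation identity $f_n(x+u)=\sum_k\binom{n}{k}f_{n-k}(x)u^k$ is needed. The paper's detour serves to present the corollary as an application of Proposition~\ref{T1}. As a by-product, it gives an intermediate expansion that is manifestly a polynomial in $q$ and valid at $q=0$; you recover that only through your closing continuity remark.
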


\begin{proof}
For $a_{n}=\lambda ^{n}f_{n}\left( x\right) $ in Proposition \ref{T1} we
obtain%
\begin{equation*}
\overline{M}\left( j\right) =\overset{j}{\underset{l=0}{\sum }}\left(
-1\right) ^{l}\binom{j}{l}a_{l}=\overset{j}{\underset{l=0}{\sum }}\binom{j}{l%
}\left( -\lambda \right) ^{l}f_{l}\left( x\right) ,
\end{equation*}%
and since%
\begin{equation*}
\underset{j\geq 0}{\sum }\overline{M}\left( j\right) \frac{z^{j}}{j!}=e^{t}%
\underset{l\geq 0}{\sum }f_{l}\left( x\right) \frac{\left( -\lambda t\right)
^{l}}{l!}=F\left( -\lambda t\right) e^{\left( x-\frac{1}{\lambda }\right)
\left( -\lambda t\right) }=\underset{j\geq 0}{\sum }f_{j}\left( x-\frac{1}{%
\lambda }\right) \frac{\left( -\lambda t\right) ^{j}}{j!},
\end{equation*}%
it follows that $\overline{M}\left( j\right) =\left( -\lambda \right)
^{j}f_{j}\left( x-\frac{1}{\lambda }\right) .$ Then, from Proposition \ref%
{T1} we get%
\begin{equation*}
\overline{S}_{n}\left( q\right) =\overset{n}{\underset{j=0}{\sum }}\binom{n}{%
j}f_{j}\left( x-\frac{1}{\lambda }\right) \left( \lambda q\right) ^{j},
\end{equation*}%
and because%
\begin{eqnarray*}
\underset{n\geq 0}{\sum }\overline{S}_{n}\left( q\right) \frac{t^{n}}{n!} &=&%
\underset{j\geq 0}{\sum }f_{j}\left( x-\frac{1}{\lambda }\right) \frac{%
\left( \lambda qt\right) ^{j}}{j!}\underset{n\geq j}{\sum }\frac{t^{n-j}}{%
\left( n-j\right) !} \\
&=&e^{t}\underset{j\geq 0}{\sum }f_{j}\left( x-\frac{1}{\lambda }\right) 
\frac{\left( \lambda qt\right) ^{j}}{j!} \\
&=&F\left( \lambda qt\right) e^{\left( x-\frac{1}{\lambda }\right) \left(
\lambda qt\right) +t} \\
&=&F\left( \lambda qt\right) e^{\left( x-\frac{1}{\lambda }+\frac{1}{\lambda
q}\right) \left( \lambda qt\right) } \\
&=&\underset{n\geq 0}{\sum }f_{n}\left( x-\frac{1}{\lambda }+\frac{1}{%
\lambda q}\right) \frac{\left( \lambda qt\right) ^{n}}{n!},
\end{eqnarray*}%
it follows $\overline{S}_{n}\left( q\right) =\left( \lambda q\right)
^{n}f_{n}\left( x-\frac{1}{\lambda }+\frac{1}{\lambda q}\right) ,$ which
completes the proof.
\end{proof}

\noindent By setting $q=\frac{x}{x+y}$ in (\ref{c}) we get%
\begin{eqnarray}
\overset{n}{\underset{k=0}{\sum }}a_{k}\binom{n}{k}y^{k}x^{n-k} &=&\overset{n%
}{\underset{j=0}{\sum }}\left( -1\right) ^{j}\binom{n}{j}M\left( n,j\right)
\left( x+y\right) ^{n-j}x^{j},  \label{f} \\
\overset{n}{\underset{k=0}{\sum }}a_{k}\binom{n}{k}y^{n-k}x^{k} &=&\overset{n%
}{\underset{j=0}{\sum }}\left( -1\right) ^{j}\binom{n}{j}\overline{M}\left(
j\right) \left( x+y\right) ^{j}x^{n-j}.  \label{fbis}
\end{eqnarray}

\noindent Now, if we denote by $\mathbf{C}^{n}$ for the umbra defined by $%
\mathbf{C}^{n}:=c_{n},$ then we can write%
\begin{equation}
F\left( t\right) =1+\sum_{n\geq 1}c_{n}\frac{t^{n}}{n!}=1+\sum_{n\geq 1}%
\mathbf{C}^{n}\frac{t^{n}}{n!}=\exp \left( \mathbf{C}t\right) ,  \label{h2}
\end{equation}%
which gives the following representation%
\begin{equation*}
\sum_{n\geq 0}f_{n}\left( y\right) \frac{t^{n}}{n!}=\exp \left( \left( 
\mathbf{C}+y\right) t\right) =1+\sum_{n\geq 1}\left( \mathbf{C+}y\right) ^{n}%
\frac{t^{n}}{n!}.
\end{equation*}%
So, $f_{n}\left( y\right) $ can be written by the umbral representation as%
\begin{equation}
f_{n}\left( y\right) =\left( \mathbf{C}+y\right) ^{n}.  \label{t}
\end{equation}%
By using the umbral technic used in \cite{Mih2}, the identity (\ref{f}) can
be generalized as follows.

\begin{corollary}
For any sequence $\left( f_{n}\left( y\right) \right) $ of Appell
polynomials there hold%
\begin{eqnarray}
\overset{n}{\underset{k=0}{\sum }}a_{k}\binom{n}{k}f_{k}\left( y\right)
x^{n-k} &=&\overset{n}{\underset{j=0}{\sum }}\left( -1\right) ^{j}\binom{n}{j%
}M\left( n,j\right) f_{n-j}\left( x+y\right) x^{j},  \label{u} \\
\overset{n}{\underset{k=0}{\sum }}a_{k}\binom{n}{k}f_{n-k}\left( y\right)
x^{k} &=&\overset{n}{\underset{j=0}{\sum }}\left( -1\right) ^{j}\binom{n}{j}%
\overline{M}\left( j\right) f_{j}\left( x+y\right) x^{n-j}.  \label{ubis}
\end{eqnarray}
\end{corollary}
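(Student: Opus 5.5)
We prove the corollary by \emph{umbralizing} the variable $y$ in the homogeneous identities (\ref{f}) and (\ref{fbis}). Both sides of (\ref{f}) are polynomials in the independent variables $x,y$ with coefficients built from the $a_k$. We apply the linear functional $L$ defined on polynomials in $y$ by $L(y^{k})=\mathbf{C}^{k}=c_{k}$. More precisely, we first translate $y\mapsto y+\mathbf{C}$: the right-hand sides involve $y^{k}$ and $(x+y)^{n-j}$ as genuine powers, so replacing $y$ by $\mathbf{C}+y$ and then evaluating the umbra linearly gives, by (\ref{t}), $(\mathbf{C}+y)^{k}=f_{k}(y)$ and $(\mathbf{C}+x+y)^{n-j}=f_{n-j}(x+y)$.

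For (\ref{u}) we start from (\ref{f}), which after renaming reads
\begin{equation*}
\overset{n}{\underset{k=0}{\sum }}a_{k}\binom{n}{k}Y^{k}x^{n-k}=\overset{n}{\underset{j=0}{\sum }}\left( -1\right) ^{j}\binom{n}{j}M\left( n,j\right) \left( x+Y\right) ^{n-j}x^{j}.
\end{equation*}
This is a polynomial identity in the indeterminate $Y$, valid for all real $Y$, since (\ref{f}) follows from (\ref{c}) for generic values and both sides are polynomials. Here $x$ and the $a_k$ are scalars independent of $Y$. We substitute $Y=\mathbf{C}+y$ and expand each power binomially in $\mathbf{C}$. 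We then apply the linear evaluation $\mathbf{C}^{i}\mapsto c_{i}$, which is legitimate because every term is linear in the powers of $\mathbf{C}$ and no products of two umbral powers appear. The left side becomes $\sum a_k\binom{n}{k}f_k(y)x^{n-k}$. On the right, $(x+\mathbf{C}+y)^{n-j}=(\mathbf{C}+(x+y))^{n-j}$ evaluates to $f_{n-j}(x+y)$.

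For (\ref{ubis}) we proceed in the same way from (\ref{fbis}), now treating $y$ as the umbral slot. The left side has $Y^{n-k}$, which evaluates to $f_{n-k}(y)$. The right side has $(x+Y)^{j}$, which evaluates to $f_{j}(x+y)$.

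The only step that needs care is justifying the umbral substitution. It requires that (\ref{f}) and (\ref{fbis}) hold as polynomial identities in $Y$, not merely for $x+y\neq 0$ as their derivation via $q=x/(x+y)$ suggests. We handle this by clearing denominators: both sides are homogeneous polynomials of degree $n$ that agree on the dense set $x+y\neq0$, so they agree identically. Alternatively, one could verify (\ref{f}) directly with the same interchange of sums as in the proof of Proposition \ref{T1}, using $\sum_j(-1)^j\binom{l}{j}(x+y)^{l-j}x^{j}=y^{l}$. Once this is established, linearity of the umbral evaluation gives both identities.
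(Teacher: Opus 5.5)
Your proof of (\ref{u}) is correct and is the paper's argument: replace $y$ by $\mathbf{C}+y$ in (\ref{f}) and use $(\mathbf{C}+y)^{n}=f_{n}(y)$. Your check that (\ref{f}) is a genuine polynomial identity is a good addition.

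The argument for (\ref{ubis}) breaks down, though, and not at the umbral step you flagged. Its input (\ref{fbis}) is false as printed, and so is (\ref{ubis}) itself. Take $n=1$, $a_{0}=1$, $a_{1}=0$, so that $\overline{M}(0)=\overline{M}(1)=1$. Then the two sides of (\ref{fbis}) are $y$ and $x-(x+y)=-y$. The two sides of (\ref{ubis}) are $f_{1}(y)$ and $x-f_{1}(x+y)=-f_{1}(y)$.

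Your density argument does not rescue this. It only extends to all $(x,y)$ whatever identity (\ref{cbis}) actually produces under $q=x/(x+y)$, and you never carried out that substitution. Doing it (multiply by $(x+y)^{n}$ and use $q^{j}=x^{j}(x+y)^{-j}$) gives
\[
\sum_{k=0}^{n}a_{k}\binom{n}{k}y^{n-k}x^{k}=\sum_{j=0}^{n}(-1)^{j}\binom{n}{j}\overline{M}(j)\,x^{j}(x+y)^{n-j},
\]
so the powers of $x$ and $x+y$ on the right of (\ref{fbis}) are interchanged. The paper's own proof has the same defect, since it likewise just umbralizes (\ref{fbis}).

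With this corrected identity, your umbral substitution goes through verbatim and yields
\[
\sum_{k=0}^{n}a_{k}\binom{n}{k}f_{n-k}(y)\,x^{k}=\sum_{j=0}^{n}(-1)^{j}\binom{n}{j}\overline{M}(j)\,f_{n-j}(x+y)\,x^{j}.
\]
This checks at $n=1$: both sides equal $a_{0}f_{1}(y)+a_{1}x$, using $f_{1}(x+y)=x+f_{1}(y)$. So the first identity stands as you proved it, but the second cannot be proved as stated and must be replaced by this corrected form. Whenever you reduce to a displayed identity you have not derived yourself, test it on the smallest case first.
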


\begin{proof}
In the identity (\ref{f}), we replace $y$ by $\mathbf{C}+y$ to obtain%
\begin{eqnarray*}
\overset{n}{\underset{k=0}{\sum }}a_{k}\binom{n}{k}\left( \mathbf{C}%
+y\right) ^{k}x^{n-k} &=&\overset{n}{\underset{j=0}{\sum }}\left( -1\right)
^{j}\binom{n}{j}M\left( n,j\right) \left( \mathbf{C+}x+y\right) ^{n-j}x^{j},
\\
\overset{n}{\underset{k=0}{\sum }}a_{k}\binom{n}{k}\left( \mathbf{C}%
+y\right) ^{n-k}x^{k} &=&\overset{n}{\underset{j=0}{\sum }}\left( -1\right)
^{j}\binom{n}{j}\overline{M}\left( j\right) \left( \mathbf{C+}x+y\right)
^{j}x^{n-j}.
\end{eqnarray*}%
Hence, the desired identities follow by using the umbral identity $\left( 
\mathbf{C}+y\right) ^{n}=f_{n}\left( y\right) .$
\end{proof}

\noindent As special case of the last Corollary, for $a_{n}=F_{n+r}$ we get%
\begin{equation}
\overset{n}{\underset{k=0}{\sum }}\binom{n}{k}F_{k+r}f_{k}\left( y\right)
x^{n-k}=\overset{n}{\underset{j=0}{\sum }}\left( -1\right) ^{j}\binom{n}{j}%
F_{n+r-2j}f_{n-j}\left( x+y\right) x^{j}  \label{v}
\end{equation}%
which gives for $r=n:$%
\begin{equation}
\overset{n}{\underset{k=0}{\sum }}\binom{n}{k}F_{n+k}f_{k}\left( y\right)
x^{n-k}=\overset{n}{\underset{j=0}{\sum }}\binom{n}{j}F_{2j}f_{j}\left(
x+y\right) x^{n-j}.  \label{o1}
\end{equation}%
Also, for $a_{n}=\left( -1\right) ^{n}\binom{\alpha }{n+r}$ we obtain the
following identities on Appell polynomials%
\begin{eqnarray}
\overset{n}{\underset{k=0}{\sum }}\left( -1\right) ^{k}\binom{\alpha }{r+k}%
\binom{n}{k}f_{k}\left( y\right) x^{n-k} &=&\overset{n}{\underset{j=0}{\sum }%
}\left( -1\right) ^{n-j}\binom{n}{j}\binom{j+\alpha }{n+r}f_{n-j}\left(
x+y\right) x^{j},  \label{w} \\
\overset{n}{\underset{k=0}{\sum }}\left( -1\right) ^{k}\binom{\alpha }{r+k}%
\binom{n}{k}f_{k}\left( y\right) x^{n-k} &=&\overset{n}{\underset{j=0}{\sum }%
}\left( -1\right) ^{j}\binom{n}{j}\binom{j+\alpha }{j+r}f_{j}\left(
x+y\right) x^{n-j}.  \label{wbis}
\end{eqnarray}

\begin{corollary}
For any sequence $\left( f_{n}\left( y\right) \right) $ of Appell
polynomials there holds%
\begin{equation}
\overset{n}{\underset{k=0}{\sum }}a_{k}\binom{n}{k}\left( bx\right)
^{k}f_{n-k}\left( y+\left( 1-b\right) x\right) =\overset{n}{\underset{k=0}{%
\sum }}S_{k}\left( 1-b\right) \binom{n}{k}x^{k}f_{n-k}\left( y\right) .
\label{s2}
\end{equation}
\end{corollary}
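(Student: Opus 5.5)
The plan is to prove the polynomial identity obtained by replacing each Appell polynomial with an ordinary power, and then lift it to Appell polynomials with the umbral substitution used in the previous corollary, i.e.\ $f_{n}(y)=(\mathbf{C}+y)^{n}$ as in (\ref{t}). Concretely, I will first show that for indeterminates $x$ and $u$ and any $b$,
\begin{equation*}
\overset{n}{\underset{k=0}{\sum }}a_{k}\binom{n}{k}\left( bx\right) ^{k}\left( u+\left( 1-b\right) x\right) ^{n-k}=\overset{n}{\underset{k=0}{\sum }}S_{k}\left( 1-b\right) \binom{n}{k}x^{k}u^{n-k}.
\end{equation*}
Then I will set $u=\mathbf{C}+y$. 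Both sides are linear in the powers $u^{j}$, since $(u+(1-b)x)^{n-k}$ is first expanded binomially in $u$, so the umbral evaluation $u^{j}\mapsto (\mathbf{C}+y)^{j}=f_{j}(y)$ is legitimate. The left side then becomes $\sum_k a_k\binom{n}{k}(bx)^k f_{n-k}(y+(1-b)x)$. This uses the Appell addition property $f_{m}(y+z)=\sum_{i}\binom{m}{i}f_{i}(y)z^{m-i}$, which is exactly $(\mathbf{C}+y+z)^{m}$ expanded. The right side becomes $\sum_k S_k(1-b)\binom{n}{k}x^k f_{n-k}(y)$, which is (\ref{s2}).

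For the polynomial identity itself, I will expand the right-hand side using the definition $S_{k}(1-b)=\sum_{j}a_{j}\binom{k}{j}b^{j}(1-b)^{k-j}$. I will swap the sums and apply $\binom{n}{k}\binom{k}{j}=\binom{n}{j}\binom{n-j}{k-j}$, exactly as in the proof of Proposition \ref{P2}. The inner sum over $k$ is then $\sum_{l}\binom{n-j}{l}((1-b)x)^{l}u^{n-j-l}=(u+(1-b)x)^{n-j}$, with the factor $(bx)^{j}$ pulled out. This gives the left-hand side directly.

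Alternatively, one can take (\ref{d1bis}) or (\ref{d1}) after homogenizing, with the binomial probabilities replaced by $x$ and $u$, in the way (\ref{f}) was derived from (\ref{c}) via $q=x/(x+y)$.

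The only delicate point is justifying the umbral step: one must make sure the substitution is applied after expansion into powers of $u$, and not to products of powers. Since each side is a polynomial of degree at most $n$ in $u$ with coefficients free of $u$, this is immediate. So I expect no real obstacle beyond careful index bookkeeping in the binomial swap.
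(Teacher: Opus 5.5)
Your proposal is correct and follows the paper's route: first establish the homogeneous identity $\sum_k a_k\binom{n}{k}(bx)^k(u+(1-b)x)^{n-k}=\sum_k S_k(1-b)\binom{n}{k}x^k u^{n-k}$, then substitute $u=\mathbf{C}+y$ using (\ref{t}). The only difference is in how the homogeneous identity is obtained. The paper puts $q=x/(x+y)$ into (\ref{d1bis}), which is the alternative you mention. You instead redo the binomial swap of Proposition \ref{P2} directly; your computation is correct and has the small advantage of not requiring $x+y\neq 0$.
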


\begin{proof}
Similarly as above, the identity (\ref{d1bis}) can be written as%
\begin{equation*}
S_{n}\left( 1-bq\right) =\overset{n}{\underset{k=0}{\sum }}a_{k}\binom{n}{k}%
\left( 1-bq\right) ^{n-k}\left( bq\right) ^{k}=\overset{n}{\underset{k=0}{%
\sum }}S_{k}\left( 1-b\right) \binom{n}{k}\left( 1-q\right) ^{n-k}q^{k}
\end{equation*}%
and by setting $q=\frac{x}{x+y},$ it becomes%
\begin{equation*}
\overset{n}{\underset{k=0}{\sum }}a_{k}\binom{n}{k}\left( y+\left(
1-b\right) x\right) ^{n-k}\left( bx\right) ^{k}=\overset{n}{\underset{k=0}{%
\sum }}S_{k}\left( 1-b\right) \binom{n}{k}y^{n-k}x^{k}.
\end{equation*}%
So, by replacing $y$ by $\mathbf{C}+y$ and using (\ref{t}), we get the
desired identity.
\end{proof}


\begin{thebibliography}{99}
\bibitem{App} P. Appell, Sur une classe de polyn\^{o}mes. \textit{Ann. Sci.
Ecole Norm. Sup.,} 9, (1880), 119--144.

\bibitem{Ber} S. Bernstein, D\'{e}monstration du th\'{e}oreme de
Weierstrass, fond\'{e}e sur le calcul des probabilit\'{e}s. \textit{Comm.
Kharkov Math. Soc.,} (1912), 1--2.

\bibitem{Boy} K. N. Boyadzhiev, Harmonic number identities via Euler's
transform. \textit{J. Integer Seq.,} (2009), 12, 8.

\bibitem{Boy1} K. N. Boyadzhiev, Exponential polynomials, Stirling numbers,
and evaluation of some gamma integrals. \textit{Abstr. Appl. Anal.,} (2009),
1--18. https://doi.org/10.1155/2009/168672

\bibitem{Com} L. Comtet, \textit{Advanced Combinatorics}, D. Reidel,
Dordrecht, Holland, 1974.

\bibitem{Kom} T. Komatsu, B. Sury, Polynomial identities for binomial sums
of harmonic numbers of higher order. \textit{Mathematics,} 13, 321, (2025).
https://doi.org/10.3390/math13020321

\bibitem{Cic} J. Cicho\'{n}, Z. Golebiewski, On Bernoulli sums and Bernstein
polynomials, in 23rd Intern. \textit{Meeting on Probabilistic,
Combinatorial, and Asymptotic Methods for the Analysis of Algorithms},
(2012), 179--190.

\bibitem{Rob} R. S. Costas-Santos, The Laguerre constellation of classical
orthogonal polynomials. \textit{Mathematics,} 13, 277, (2025).
https://doi.org/10.3390/math13020277

\bibitem{Dil} A. Dil, V. Kurt, Investigating geometric and exponential
polynomials with Euler-Seidel matrices. \textit{J. Integer Seq.}, 14,
(2011), Article 11.4.6.

\bibitem{Ges} I. Gessel, Lagrange Inversion. \textit{J. Combin. Theory Ser.
A,} 144, (2016), 212--249.

\bibitem{Fla} P. Flajolet, Singularity analysis and asymptotics of Bernoulli
sums. \textit{Theor. Comput. Sci.,} 215(1-2), (1999), 371--381.

\bibitem{Jac} P. Jacquet, W. Szpankowski, Entropy computations via analytic
depoissonization. \textit{IEEE Trans. Inf. Theory.,} 45(4), (2002),
1072--1081.

\bibitem{Goy} T. Goy, M. Shattuck, Determinants of Toeplitz-Hessenberg
matrices with Fuss-Catalan entries. \textit{J. Integer Seq.}, 29, (2026),
Article 26.1.2.

\bibitem{Gou} H. W. Gould, Series transformations for finding recurrences
for sequences. \textit{Fib. Quart.} 28(2), (1990),166-71.

\bibitem{Hau} P. Haukkanen, Formal power series for binomial sums of
sequences of numbers. \textit{Fib. Quart.,} 31(1), (1993), 28-31.

\bibitem{Mih2} M. Mihoubi, S. Taharbouchet, Some identities involving Appell
polynomials. \textit{Quaest. Math.,} 43(2), (2020), 203--212.

\bibitem{Mih3} M. Mihoubi, S. Taharbouchet, Identities and congruences
involving the geometric polynomials. \textit{Miskolc math. notes,} 20(1),
(2019), 395--408.

\bibitem{Mne} S. A. Mneimneh, Binomial sum of harmonic numbers. \textit{%
Discrete Math.,} 346(6), (2023). https://doi.org/10.1016/j.disc.2022.113075

\bibitem{Spi} M. Spivey, Combinatorial sums and finite differences. \textit{%
Discrete Math.,} 307 (2007), 3130-3146.

\bibitem{Sur} E. Surya, L. Warnke, Lagrange inversion formula by induction. 
\textit{Amer. Math. Monthly}, 130, (2023), 944-948.
\end{thebibliography}
\end{document}